\documentclass[11pt]{article}
\usepackage{amsmath,amsthm,amsfonts,amssymb,amscd, amsxtra}
\usepackage{url}
\usepackage[margin=1in]{geometry}
\usepackage{cite}
\usepackage{color}
\usepackage{graphicx}
\usepackage{caption}
\usepackage{subcaption}
\newtheorem{theorem}{Theorem}[section]

\newtheorem{proposition}{Proposition}[section]
\newtheorem{remark}{Remark}
\newtheorem{definition}{Definition}[section]

\newtheorem{example}{Example}

\DeclareMathOperator{\diag}{diag}
\DeclareMathOperator{\sgn}{sgn}
\DeclareMathOperator{\Id}{Id}

\newcommand{\lng}{\langle}
\newcommand{\rng}{\rangle}

\newcommand{\R}{\mathbb R}

\newcommand{\NN}{\mathbb{N}}

\parskip 4pt
\parindent  4mm
\tolerance  3000

\begin{document}

\title{On the convergence of iterative schemes for solving a piecewise linear system of equations}

\author{Nicolas F. Armijo \thanks{Department of Applied Mathematics, University of São Paulo, Brazil (e-mail: {\tt
nfarmijo@ime.usp.br}).  The author was supported by Fapesp grant 2019/13096-2.}  
\and
Yunier  Bello-Cruz\thanks{Northern Illinois University, USA (e-mail: {\tt
yunierbello@niu.edu}).  The author was supported in part by R\&A Grant from NIU.}
\and
Gabriel Haeser \thanks{Department of Applied Mathematics, University of São Paulo, Brazil (e-mail: {\tt
ghaeser@ime.usp.br}). The author was supported by CNPq and Fapesp grant 2018/24293-0.}
}
\date{\today}

\maketitle


\begin{abstract}
\noindent This paper is devoted to studying the global and finite convergence of the semi-smooth Newton method for solving a piecewise linear system that arises in cone-constrained quadratic programming problems and absolute value equations. We first provide a negative answer via a counterexample to a conjecture on the global and finite convergence of the Newton iteration for symmetric and positive definite matrices. Additionally, we discuss some surprising features of the semi-smooth Newton iteration in low dimensions and its behavior in higher dimensions. Secondly, we present two iterative schemes inspired by the classical Jacobi and Gauss-Seidel methods for linear systems of equations for finding a solution to the problem. We study sufficient conditions for the convergence of both proposed procedures, which are also sufficient for the existence and uniqueness of solutions to the problem. Lastly, we perform some computational experiments designed to illustrate the behavior (in terms of CPU time) of the proposed iterations versus the semi-smooth Newton method for dense and sparse large-scale problems. Moreover, we included the numerical solution of a discretization of the Boussinesq PDE modeling a two-dimensional flow in a homogeneous phreatic aquifer. 

\medskip

\noindent
{\bf Keywords:} Piecewise linear system, quadratic programming,   semi-smooth Newton method.

\medskip
\noindent
 {\bf  2010 AMS Subject Classification:} 90C33, 15A48.

\end{abstract}

\section{Introduction}
We consider  the following  piecewise linear system:
\begin{equation}\label{eq:pwls}
		x^+ +Tx=b, 
\end{equation} 
where,  denoting by  $\R^{n \times n}$ the set of $n \times n$ matrices with real entries,  the data consists of  a   vector $b\in \R^n\equiv \R^{n \times 1}$ and  a  nonsingular  matrix $T\in\R^{n \times n}$. The  variable $x=(x_1, x_2 ,\ldots ,x_n)^T$ is  a vector in $\R^n$ and  $x^+$    is the projection of $x$ onto $\R^n_+$, which has the $i$-th component equal to $x_i^+=\max\{x_ i,0\}, i=1,\dots,n$.  
Some works dealing with problem \eqref{eq:pwls} and its generalizations include \cite{Chen:2010,Bello-Cruz:2017,Griewank:2015, Sun:2015,Mangasarian:2009a,Ferreira:2015,Barrios:2015}.  Solutions of equation \eqref{eq:pwls} are closely related to at least two important classes of well-known problems, such as the quadratic cone-constrained programming: 
\begin{equation}
\begin{matrix} \label{quad_problem}
\text{minimize} & \frac{1}{2}x^TQx+q^Tx, \\
\text{subject to} & x \in \R^n_+,
\end{matrix}
\end{equation} 
and the absolute value equation \cite{Mangasarian:2006,Mangasarian:2009a}:
\begin{equation}\label{ave}\hat Tx-|x|=\hat b.\end{equation}
Namely, the projection onto $\R^n_+$ of a solution of problem \eqref{eq:pwls} with $T = (Q-{\Id})^{-1}$ and $b =  Tq$ satisfies the linear complementarity problem given by the first order optimality conditions of \eqref{quad_problem} (see \cite{Barrios:2016} for details), while, on the other hand, for $\hat T= -2T-{\Id}$, and $\hat b=-2b$, noting that $x^+=\frac{x+|x|}{2}$ one can see that problems \eqref{eq:pwls} and \eqref{ave} are equivalent. Here $Q$ is a symmetric matrix and ${\Id}$ denotes the identity matrix. These relations attest to the importance of finding novel and efficient iterative procedures for solving equation \eqref{eq:pwls}.

In this paper, we first focus our attention on the {\it semi-smooth Newton method} for solving problem \eqref{eq:pwls}, which consists of specifying a particular generalized Jacobian of $F$ at $x$ in the problem of finding the zeroes of
\begin{equation} \label{eq:fucpw}
F(x):=x^+ + Tx- b,  \qquad \qquad  ~x \in \R^n.
\end{equation}
Namely, starting at the point  $x^{0}\in \R^n$,
the semi-smooth Newton iteration is defined by the following linear equation:
\begin{equation}\label{eq:newtonc2pw}
\left(P(x^{k}) +T\right)x^{k+1}=b,  \qquad k\in\NN,
\end{equation} where
\begin{equation}\label{P(x)}P(x) := {\rm diag}({\rm sgn}(x^+
)),\quad x \in \R^n. \end{equation} 
The above iteration was proposed in \cite{Brugnano:2009}, which was shown to be {\it globally} convergent to a solution of problem \eqref{eq:pwls} under suitable assumptions. It has been extensively studied in the literature for solving generalizations of equation \eqref{eq:pwls}; see, for instance, \cite{Bello-Cruz:2017,Mangasarian:2009a,Ferreira:2015,BelloCruz:2016b}. We emphasize that the global and linear convergence of iteration \eqref{eq:newtonc2pw} has been proved only under restricted assumptions related to the norm of the matrices  $(T+P(x))^{-1}$ for all $x\in \R^n$, which come from the study of \eqref{eq:newtonc2pw} as a contraction fixed point iteration. A promising and novel approach for establishing finite convergence of \eqref{eq:newtonc2pw} was proposed in Theorem $3$ of \cite{Barrios:2016} under the assumption that the rows of the matrices $(T+P(x))^{-1}$ for all $x\in \R^n$ have a definite sign, that is, in every row the entries have the same sign. It is worth noting that the number of matrices $P(x)$ with $x\in \R^n$ in \eqref{P(x)} is finite ($2^n$ to be precise). Hence, if the semi-smooth Newton method \eqref{eq:newtonc2pw} converges, this convergence will occur after finitely many steps. In the pursuit of weaker and verifiable sufficient conditions ensuring convergence of the sequence generated by \eqref{eq:newtonc2pw}, it was conjectured in \cite{Bello-Cruz:2017} that iteration \eqref{eq:newtonc2pw} converges after finitely many steps if $T$ is symmetric and positive definite. In this paper, we show that this conjecture is false with a counterexample. However, interestingly, we show that this assumption is enough to guarantee the existence and uniqueness of solutions to problem \eqref{eq:pwls}. Moreover, the inverse of $P(x)+T$ for all $x\in \R^n$ always exists, and thus the semi-smooth Newton method \eqref{eq:newtonc2pw} is well-defined. We will also show that although this method may cycle under this assumption, it can never cycle between only two points. In the second part of this paper, we propose two novel iterative processes inspired by the well-known Jacobi and Gauss-Seidel iterative methods for solving linear systems of equations. The main idea is to consider the Newtonian system \eqref{eq:newtonc2pw} and apply a Jacobi or a Gauss-Seidel step at each iteration $k$. The main advantage of doing so is that the iteration is computed by solving a diagonal or a triangular linear system of equations, which is considerably simpler than solving the linear system in \eqref{eq:newtonc2pw} to find $x^{k+1}$. Also, we are able to present new sufficient conditions for the convergence of these two proposed methods, which are related to the classical diagonal dominance and Sassenfeld's criterion. The existence and uniqueness of the solution for equation \eqref{eq:pwls} are also proved under both conditions. We then show with an example that the standard diagonal dominance is insufficient to ensure the existence of solutions to problem \eqref{eq:pwls}. Finally, numerical results show that the proposed methods are competitive in terms of CPU time if compared with the semi-smooth Newton method \eqref{eq:newtonc2pw}. The numerical illustration suggests that the proposed iterative methods become more efficient when the dimension is high, and the matrix is sparse. We finish the paper with an applied experiment with real data, where we solve and discuss the results of solving a piecewise linear equation raising from a discretization of the Boussinesq PDE \cite{Brugnano2008} modeling a two-dimensional flow in a homogeneous phreatic aquifer.


\subsection{Notations and preliminaries} \label{sec:int.1}
Next, we quickly present some notations and  facts used throughout the paper.
We write $\NN$ for the nonnegative integers $\{0,1,2,\ldots\}$.
The canonical inner  product in $\R^n$ will be denoted by
$\lng\cdot,\cdot\rng$  and the induced norm is $\|\cdot\|$.  
  For $x\in \R^n$, $\sgn(x)$ will denote a vector with components equal to $1$, $0$ or $-1$ depending on whether the corresponding component of the
vector $x$ is positive, zero or negative, respectively.    If $a\in\R$ and $x\in\R^n$, then denote $a^+:=\max\{a,0\}$, $a^-:=\max\{-a,0\}$ and $x^+$ and  $x^-$  the vectors with  $i$-th component equal to $x_i^+$ and  $x_ i^-$, respectively, $i=1,\dots,n$. Note that $x^+$ and $x^-$ are the projections of $x$ onto the cones $\R^n_+$ and $\R^n_-$, respectively. The matrix ${\Id} \in \R^{n \times n}$ denotes the  identity matrix.  If $x\in \R^n$ then $\diag (x)\in \R^{n \times n}$ will denote a  diagonal matrix with $(i,i)$-th entry equal to $x_i$, $i=1,\dots,n$. Denote $\|M\|:=\max \{\|Mx\|~:~  x\in \R^{n}, ~\|x\|=1\}$ for any  $M \in \R^{n\times n}$.
The following result is well-known and will be needed in the sequel:
\begin{theorem}[Contraction mapping principle \cite{Ortega:1987}, Thm. 8.2.2,  page 153] \label{fixedpoint}
 Let  $\Phi : \R^{n} \to   \R^{n}$.  Suppose that there exists $\lambda \in  [0,1)$ such that  $\|\Phi(y)-\Phi(x)\| \le \lambda \|y-x\|$, for all $ x, y \in  \R^{n}$. Then, there exists a unique $\bar x\in  \R^{n}$  such that $\Phi(\bar x) = \bar x$.
\end{theorem}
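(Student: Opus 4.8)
The plan is to prove existence via the classical Picard iteration and uniqueness directly from the contraction inequality. First I would fix an arbitrary starting point $x^0\in\R^n$ and define the sequence $x^{k+1}:=\Phi(x^k)$ for $k\in\NN$. Applying the contraction hypothesis to consecutive iterates gives $\|x^{k+1}-x^k\|=\|\Phi(x^k)-\Phi(x^{k-1})\|\le\lambda\|x^k-x^{k-1}\|$, and iterating this estimate yields the geometric decay $\|x^{k+1}-x^k\|\le\lambda^k\|x^1-x^0\|$ for every $k$.

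Next I would show that $\{x^k\}$ is a Cauchy sequence. For $m>k$, the triangle inequality together with the previous bound gives $\|x^m-x^k\|\le\sum_{j=k}^{m-1}\|x^{j+1}-x^j\|\le\left(\sum_{j=k}^{m-1}\lambda^j\right)\|x^1-x^0\|\le\frac{\lambda^k}{1-\lambda}\|x^1-x^0\|$, where the final inequality uses the summability of the geometric series, which is precisely where the assumption $\lambda\in[0,1)$ is essential. Since $\lambda^k\to0$, the right-hand side tends to $0$ as $k\to\infty$, so $\{x^k\}$ is Cauchy; because $\R^n$ is complete, it converges to some $\bar x\in\R^n$.

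I would then verify that the limit is a fixed point. The contraction hypothesis makes $\Phi$ Lipschitz, hence continuous, so passing to the limit in $x^{k+1}=\Phi(x^k)$ gives $\bar x=\Phi(\bar x)$; equivalently, one can bound $\|\bar x-\Phi(\bar x)\|\le\|\bar x-x^{k+1}\|+\|\Phi(x^k)-\Phi(\bar x)\|\le\|\bar x-x^{k+1}\|+\lambda\|x^k-\bar x\|$ and let $k\to\infty$.

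Finally, uniqueness follows directly and requires neither completeness nor the iteration: if $\bar x$ and $\bar y$ both satisfy $\Phi(\bar x)=\bar x$ and $\Phi(\bar y)=\bar y$, then $\|\bar x-\bar y\|=\|\Phi(\bar x)-\Phi(\bar y)\|\le\lambda\|\bar x-\bar y\|$, so $(1-\lambda)\|\bar x-\bar y\|\le0$; since $1-\lambda>0$, this forces $\|\bar x-\bar y\|=0$, i.e. $\bar x=\bar y$. The only substantive obstacle is the Cauchy/completeness step: the existence half of the statement genuinely relies on the completeness of $\R^n$ and on $\lambda<1$ to sum the geometric series, whereas the contraction inequality alone (without $\lambda<1$) would suffice for uniqueness but not guarantee a limit.
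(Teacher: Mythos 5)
Your proof is correct and complete: the Picard-iteration argument (geometric decay of successive differences, Cauchy via the geometric series, completeness of $\R^n$, continuity of $\Phi$ to identify the limit as a fixed point, and the direct uniqueness estimate) is the standard proof of this theorem. The paper itself gives no proof at all --- it states the result as well known and cites it from Ortega and Rheinboldt (Thm.~8.2.2), so there is nothing to compare against beyond noting that your argument is precisely the classical one that reference contains.
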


\section{The semi-smooth Newton method} \label{sec: defscls}

In this section, we present and analyze the convergence of the semi-smooth Newton method given by iteration \eqref{eq:newtonc2pw} for solving problem \eqref{eq:pwls}. In \cite{Barrios:2016,Brugnano:2009}, it was shown that the condition $\sgn((x^{k})^+) = \sgn((x^{k+1})^+)$ is sufficient to declare that $x^{k+1}$ is a solution of equation \eqref{eq:pwls}. We now show, in fact, that a component-wise version of this stopping criterion holds:


\begin{proposition}[Component-wise stopping criterion]\label{gen_pk=pk+1}
Assume that the sequence $(x^k)_{k\in\NN}$ generated by method \eqref{eq:newtonc2pw} is well defined. If $\sgn((x^{k+1}_i)^+) = \sgn((x^{k}_i)^+)$ for some $i \in \{1,\dots,n\}$ and some $k$, then $F_i(x^{k+1}) = 0$, where $F$ is defined in \eqref{eq:fucpw}.
\end{proposition}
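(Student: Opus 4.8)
The plan is to read off the $i$-th scalar equation from the Newton system \eqref{eq:newtonc2pw} and substitute it directly into the definition of $F_i$. Writing $p_i := \sgn((x^k_i)^+)$ for the $(i,i)$-entry of $P(x^k)$, the $i$-th row of \eqref{eq:newtonc2pw} reads
$$p_i\, x^{k+1}_i + (Tx^{k+1})_i = b_i,$$
so that $(Tx^{k+1})_i - b_i = -p_i\, x^{k+1}_i$. Plugging this into $F_i(x^{k+1}) = (x^{k+1}_i)^+ + (Tx^{k+1})_i - b_i$ from \eqref{eq:fucpw} immediately gives $F_i(x^{k+1}) = (x^{k+1}_i)^+ - p_i\, x^{k+1}_i$.

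The crux of the argument is the elementary scalar identity $a^+ = \sgn(a^+)\, a$, valid for every $a \in \R$: indeed, if $a > 0$ then $a^+ = a$ and $\sgn(a^+) = 1$, while if $a \le 0$ then both sides vanish. Applying this with $a = x^{k+1}_i$ rewrites the previous expression as
$$F_i(x^{k+1}) = \big(\sgn((x^{k+1}_i)^+) - \sgn((x^k_i)^+)\big)\, x^{k+1}_i.$$
Under the hypothesis $\sgn((x^{k+1}_i)^+) = \sgn((x^k_i)^+)$ the factor in parentheses vanishes, and hence $F_i(x^{k+1}) = 0$, as claimed.

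I expect no genuine obstacle here; the only non-routine step is recognizing the identity $a^+ = \sgn(a^+)\, a$, which is exactly what converts the sign-matching hypothesis into the algebraic cancellation. Everything else is substitution, with the one subtlety that the row equation uses the \emph{old} sign $p_i = \sgn((x^k_i)^+)$ while the term $(x^{k+1}_i)^+$ carries the \emph{new} sign $\sgn((x^{k+1}_i)^+)$, so the two are only forced to agree once the hypothesis is imposed. It is worth remarking that the computation in fact produces the exact componentwise formula $F_i(x^{k+1}) = \big(\sgn((x^{k+1}_i)^+) - \sgn((x^k_i)^+)\big)\, x^{k+1}_i$, from which the global stopping criterion $\sgn((x^{k+1})^+) = \sgn((x^{k})^+)$ of \cite{Barrios:2016,Brugnano:2009} is recovered simply by requiring the equality to hold for all $i$.
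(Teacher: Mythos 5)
Your proof is correct and follows essentially the same route as the paper: the paper performs the identical computation in matrix form, using $ (x^{k+1})^+ = P(x^{k+1})x^{k+1}$ (the matrix version of your identity $a^+ = \sgn(a^+)\,a$) together with $b = (P(x^k)+T)x^{k+1}$ to obtain $F(x^{k+1}) = (P(x^{k+1})-P(x^k))x^{k+1}$, and then reads off the same componentwise formula $F_i(x^{k+1}) = \bigl(\sgn((x^{k+1}_i)^+)-\sgn((x^k_i)^+)\bigr)x^{k+1}_i$ that you derived row by row. The only difference is notational (scalar rows versus matrices), so the arguments coincide.
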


\begin{proof}
By the definition of $F$ and $x^{k+1}$, we have
\begin{align*}
F(x^{k+1}) = & (x^{k+1})^+ + Tx^{k+1} - b \\
= & (P(x^{k+1})+T)x^{k+1} - b\\
= & (P(x^{k+1})+T)x^{k+1} - (P(x^{k})+T)x^{k+1}\\
= & (P(x^{k+1})-P(x^{k}))x^{k+1}.
\end{align*} Hence,
\begin{equation} \label{F_sgn_eq}
F_i(x^{k+1}) = \left(\sgn((x^{k+1}_i)^+)-\sgn((x^{k}_i)^+)\right)x^{k+1}_i,
\end{equation} for any $i \in \{1,\dots,n\}$, which implies the desired result.
\end{proof}

In our study, a crucial role will be played by diagonal dominance. Let us start by showing that in the most extreme case of diagonal dominance, namely when the matrix is diagonal, one can list all solutions of the equation, and iteration \eqref{eq:newtonc2pw} finds a solution in at most two steps.

\begin{proposition}[Finite convergence for the diagonal case]\label{diag_case}
Let $b \in \R^n$ and $T\in\R^{n\times n}$ be a diagonal matrix with entries $T = (t_{ii})$, $i=1,\dots,n$ such that $t_{ii} \notin \{0,-1\}$ for all $i$. Equation \eqref{eq:pwls} has no solutions if, and only if, $t_{ii}\in(-1,0)$ and $b_i<0$ for some $i$. If a solution of \eqref{eq:pwls} exists, then \eqref{eq:newtonc2pw} converges in at most two iterations to one of the solutions. In this case, the number of solutions of problem \eqref{eq:pwls} is given by $2^r$, where $r$ is the number of indexes $i$ such that $b_i>0$ and $t_{ii}\in(-1,0)$.
\end{proposition}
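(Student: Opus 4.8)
The plan is to exploit the separability that a diagonal $T$ forces on the whole problem. Since $P(x) = \diag(\sgn(x^+))$ is diagonal for every $x$, the matrix $P(x^k)+T$ appearing in \eqref{eq:newtonc2pw} is diagonal, so both the system \eqref{eq:pwls} and the iteration \eqref{eq:newtonc2pw} decouple into $n$ independent scalar problems, the $i$-th being
\[
x^+ + t_{ii}\,x = b_i.
\]
It therefore suffices to prove the one-dimensional version of each assertion for a scalar equation $x^+ + t x = b$ with $t \notin \{0,-1\}$ and then recombine: the full system has no solution exactly when some scalar equation has none, its solution set is the Cartesian product of the scalar solution sets, and the vector iteration converges in as many steps as its slowest component.

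First I would solve the scalar equation by splitting on the sign of $x$. On the branch $x \geq 0$ it reads $(1+t)x = b$, producing the candidate $x_1 = b/(1+t)$ (well defined since $t \neq -1$), admissible iff $x_1 \geq 0$; on the branch $x < 0$ it reads $t x = b$, producing $x_2 = b/t$ (well defined since $t \neq 0$), admissible iff $x_2 < 0$. A short sign analysis of $b/(1+t)$ and $b/t$ across the three regimes $t>0$, $-1<t<0$ and $t<-1$ then yields the complete picture: no candidate is admissible exactly when $b<0$ and $t\in(-1,0)$; both candidates are admissible exactly when $b>0$ and $t\in(-1,0)$; and in every remaining case (including $b=0$, where $x_1=x_2=0$) exactly one is admissible. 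Recombining over components gives both the stated existence criterion and, whenever a solution exists, a solution count of $2^r$ with $r$ the number of indices $i$ having $b_i>0$ and $t_{ii}\in(-1,0)$.

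For the convergence claim I would study the scalar iteration $x^{k+1} = b/(p(x^k)+t)$ with $p(x) := \sgn(x^+) \in \{0,1\}$; since $p(x^k)+t \in \{t,\,1+t\}$ is never zero, the iteration is well defined. After one step $x^1$ equals one of the two candidates $x_1$ or $x_2$, according to the sign of $x^0$. The key observation is that the only configuration in which the iterates could fail to settle immediately, namely $x_1 \leq 0$ together with $x_2 > 0$ (which would send $x_1 \mapsto x_2 \mapsto x_1$), forces $b<0$ and $t\in(-1,0)$, precisely the excluded no-solution case. Hence, under the standing hypothesis that a solution exists this bounce cannot occur, so $x_1\le 0$ implies $x_2\le 0$ and $x_2>0$ implies $x_1>0$. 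A direct case check then shows that $x^2$ always lands on an admissible candidate with $\sgn((x^2)^+)=\sgn((x^1)^+)$, whence $x^2$ solves the equation by the component-wise stopping criterion of Proposition \ref{gen_pk=pk+1}.

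I expect the main obstacle to be precisely the bookkeeping in this last step: one must verify that the iteration cannot cycle between the two candidate points and that it reaches, and remains at, a genuine solution by the second iterate in every admissible regime. All the subtlety concentrates in ruling out the bounce $x_1 \leftrightarrow x_2$, and the clean way to handle it is to notice that the bounce condition coincides exactly with the non-existence condition isolated earlier, so that existence of a solution and at-most-two-step convergence are two faces of the same sign analysis.
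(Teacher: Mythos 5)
Your proposal is correct and follows essentially the same route as the paper's proof: reduce to $n$ independent scalar equations via the diagonal structure, identify the two candidate values $b_i/t_{ii}$ and $b_i/(1+t_{ii})$, classify existence and the $2^r$ count by a sign analysis over the regimes $t_{ii}>0$, $t_{ii}<-1$, $t_{ii}\in(-1,0)$, and invoke Proposition~\ref{gen_pk=pk+1} to certify that $x^2$ solves the equation. The only difference is organizational --- you phrase the convergence step as ruling out a two-point ``bounce'' on the candidate pair (and note that the bounce condition coincides exactly with non-existence), whereas the paper tracks how $\sgn((x^k_i)^+)$ stabilizes in each regime of $t_{ii}$ --- but the underlying argument is the same.
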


\begin{proof}
Let $x^0 \in \R^n$ be any starting point. By definition, we have
$$ x^1 = (P(x^0) + T)^{-1}b.$$ Since $T$ is a diagonal matrix, we have the following component-wise expression for $x^1$:
\begin{equation}\label{coord-i}
x^1_i = ((P(x^0) + T)^{-1}b)_i =
\begin{cases}
 \frac{b_i}{t_{ii}}, & x^0_i \leq 0, \\ 
 \frac{b_i}{1+t_{ii}}, & x^0_i > 0,    \end{cases}
\end{equation} for $i=1,\dots,n$. 
For a fixed $i$, if $t_{ii} > 0$, we get by \eqref{coord-i} that $\sgn((x^1_i)^+) = \sgn(b_i^+)$ and since $b$ is fixed we deduce that $\sgn((x^2_i)^+) = \sgn((x^1_i)^+)$. Similarly, if $t_{ii}<-1$, we conclude that $\sgn((x^1_i)^+)=1-\sgn(b_i^+)$. Hence, $\sgn((x^2_i)^+) = \sgn((x^1_i)^+)$. Thus, when there is no $i$ such that $t_{ii}\in(-1,0)$, by Proposition \ref{gen_pk=pk+1} we deduce that \eqref{eq:newtonc2pw} converges in two steps. In particular one can check that in this case the solution is unique with $i$-th component equals to 
\[ \left\{
\begin{matrix} 
      \frac{b_i}{t_{ii}}, & b_i \leq 0, \\
      \frac{b_i}{1+t_{ii}}, & b_i > 0,
   \end{matrix}\right.
\]
when $t_{ii}>0$, and
\[
\begin{cases}
\frac{b_i}{1+t_{ii}} ,& b_i \leq 0,\\
\frac{b_i}{t_{ii}} ,& b_i > 0,\\
\end{cases}
\]
when $t_{ii} < -1$.

Now, it directly follows from equation $\eqref{eq:pwls}$ that there is no solution if $b_i<0$ for some $i$ such that $t_{ii}\in(-1,0)$. If, however, $b_i\geq 0$ for such $i$, the solutions for each component-wise equation are given by $\frac{b_i}{t_{ii}}$ and $\frac{b_i}{1+t_{ii}}$, amounting to the desired formula for the number of solutions.
Therefore, assuming that the problem has a solution, we conclude that $b_i\geq0$ for all $i$ such that $t_{ii}\in(-1,0)$. By \eqref{coord-i}, it is easy to see that in this case, we have $\sgn((x^1_i)^+)=\sgn((x^0_i)^+)$. Using Proposition \ref{gen_pk=pk+1} and a similar computation done previously, we conclude that the method converges to a solution in at most two steps.
\end{proof}

An auxiliary result in our analysis follows next.

\begin{proposition} \label{prop_xpy-ypx}
Let $x,y \in \R^n$. Then
\begin{itemize}
\item[i)] $(y_i-x_i)(\sgn(y^+_i)x_i-\sgn(x^+_i)y_i) \leq 0, \quad \forall i = 1,\dots,n;$
\item[ii)] $(y_i-x_i)(\sgn(x^+_i)x_i-\sgn(y^+_i)y_i) \leq 0, \quad \forall i = 1,\dots,n.$
\end{itemize}
 In particular we have that $(y-x)^T(P(y)x-P(x)y) \leq 0$ and $(y-x)^T(P(x)x-P(y)y) \leq 0$.
\end{proposition}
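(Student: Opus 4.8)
The plan is to prove both inequalities coordinate-by-coordinate and then recover the two inner-product estimates by summation, since the $i$-th entries of $P(y)x - P(x)y$ and of $P(x)x - P(y)y$ are exactly the second factors appearing in (i) and (ii). The first step is to record the elementary fact that $\sgn(a^+) \in \{0,1\}$ for every $a \in \R$, equal to $1$ precisely when $a > 0$ and to $0$ when $a \le 0$; in particular $\sgn(a^+)\,a = a^+$. This reduces each claim to a statement about a single pair of reals $x_i, y_i$.

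For part (ii) I would exploit this last identity directly. Its second factor equals $\sgn(x^+_i)x_i - \sgn(y^+_i)y_i = x_i^+ - y_i^+$, so the assertion is $(y_i - x_i)(x_i^+ - y_i^+) \le 0$. This is just the monotonicity of the scalar map $a \mapsto a^+$: since that map is nondecreasing, $x_i - y_i$ and $x_i^+ - y_i^+$ always share the same sign (or one of them vanishes), so their product is nonnegative and the displayed quantity is nonpositive. Equivalently, the aggregate estimate $(y-x)^T(P(x)x - P(y)y) = (y-x)^T(x^+ - y^+) \le 0$ follows at once from monotonicity of the projection onto $\R^n_+$.

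Part (i) is slightly less automatic, because its second factor $\sgn(y^+_i)x_i - \sgn(x^+_i)y_i$ is a cross term that does not collapse to $x_i^+ - y_i^+$. Here I would argue by a short case analysis on the four sign patterns of $(x_i, y_i)$. When both are positive the expression is $(y_i - x_i)(x_i - y_i) = -(y_i - x_i)^2 \le 0$; when both are nonpositive both indicators vanish and the expression is $0$; in each of the two mixed cases exactly one indicator vanishes, leaving a product of the form $x_i(y_i - x_i)$ or $-y_i(y_i - x_i)$ whose two factors carry opposite signs. Every case yields the desired inequality.

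The argument is entirely elementary, so there is no genuinely hard step; the only point requiring care is the boundary convention at zero, where $\sgn(0^+) = 0$, which must be tracked consistently through the case split in (i) so that the nonpositive branches are handled correctly. Once (i) and (ii) hold for each coordinate, summing over $i = 1,\dots,n$ gives $(y-x)^T(P(y)x - P(x)y) \le 0$ and $(y-x)^T(P(x)x - P(y)y) \le 0$, which completes the proof.
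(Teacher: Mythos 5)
Your proof is correct, and for part (i) it is essentially the paper's proof: the same case analysis on the signs of $x_i$ and $y_i$, with the paper merely compressing your two equal-sign cases into a single one by writing $\sgn(y_i^+)=\sgn(x_i^+)=s$, so that the expression becomes $-s(y_i-x_i)^2\le 0$. The genuine difference is in part (ii). The paper disposes of it with ``the second statement follows by a similar computation,'' i.e.\ another case split, whereas you use the identity $\sgn(a^+)a=a^+$ to collapse the second factor to $x_i^+-y_i^+$, turning (ii) into the coordinatewise monotonicity of $a\mapsto a^+$ --- equivalently, the monotonicity inequality $(y-x)^T(y^+-x^+)\ge 0$ for the projection onto $\R^n_+$. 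This is a cleaner and more conceptual route: it needs no case analysis at all, and it makes explicit the interpretation that the paper itself relies on later, since the alternative uniqueness argument in the remark after Proposition \ref{uniqueness-sol} uses exactly $P(x)x=x^+$ and $P(y)y=y^+$. What the paper's uniform treatment buys instead is symmetry: both parts are handled by one elementary mechanism. One small point of care in your favor: the paper's displayed summation contains a sign typo, writing $(y-x)^T(P(x)y-P(y)x)$ on the left while the summands are the components of $(y-x)^T(P(y)x-P(x)y)$; your summation step states the conclusion in the correct form.
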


\begin{proof} We only have three different cases to be analyzed:
\begin{itemize}
\item[1)] if $\sgn(y^+_i)=\sgn(x^+_i)=s,$ then
$ (y_i-x_i)(\sgn(y^+_i)x_i-\sgn(x^+_i)y_i) = -s(y_i-x_i)^2 \leq 0;$
\item[2)] if $\sgn(y^+_i)=1$ and $\sgn(x^+_i)=0,$ then
$ (y_i-x_i)(\sgn(y^+_i)x_i-\sgn(x^+_i)y_i) = x_i(y_i-x_i) \leq 0;$
\item[3)] if $\sgn(y^+_i)=0$ and $\sgn(x^+_i)=1,$ then
$ (y_i-x_i)(\sgn(y^+_i)x_i-\sgn(x^+_i)y_i) = -y_i(y_i-x_i) \leq 0.$
\end{itemize} Therefore, as a consequence we obtain that
$$(y-x)^T (P(x)y-P(y)x)= \sum_{i=1}^n (y_i-x_i)(\sgn(y^+_i)x_i-\sgn(x^+_i)y_i) \leq 0.$$ The second statement follows by a similar computation.
\end{proof}

Until the end of this section, we assume that the matrix $T$ is symmetric and positive definite. This assumption has been considered before in \cite{Bello-Cruz:2017} where the authors conjectured the global and finite convergence of the method under this assumption. To fully address this conjecture, we begin with a result about the existence and uniqueness of the solution of equation \eqref{eq:pwls} under this assumption. 
\begin{proposition}[Existence and uniqueness of solutions]\label{uniqueness-sol}
If $T$ is a symmetric and positive definite $n \times n$ matrix and $b \in \R^n$, then problem \eqref{eq:pwls} has one and only one solution.
\end{proposition}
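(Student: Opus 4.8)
The plan is to prove the two claims separately: uniqueness will follow immediately from the monotonicity of the map $x\mapsto x^+$ already recorded in Proposition \ref{prop_xpy-ypx}, while existence will be obtained by exhibiting a solution of \eqref{eq:pwls} as the minimizer of a strongly convex potential. Throughout, let $\mu>0$ denote the smallest eigenvalue of $T$, so that $\lng z,Tz\rng\ge\mu\|z\|^2$ for every $z\in\R^n$.

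For uniqueness, I would suppose that $x$ and $y$ both solve \eqref{eq:pwls}. Subtracting the two equations gives $(x^+-y^+)+T(x-y)=0$, and taking the inner product with $x-y$ yields
\[ \lng x-y,\,x^+-y^+\rng + \lng x-y,\,T(x-y)\rng = 0. \]
Since $\sgn(x_i^+)x_i=x_i^+$ for every $i$, one has $P(x)x=x^+$ and $P(y)y=y^+$, so part (ii) of Proposition \ref{prop_xpy-ypx} states precisely that $\lng x-y,\,x^+-y^+\rng=\lng y-x,\,P(x)x-P(y)y\rng\cdot(-1)\ge 0$. Combined with $\lng x-y,\,T(x-y)\rng\ge\mu\|x-y\|^2$, the displayed sum is at least $\mu\|x-y\|^2$, which can vanish only if $x=y$.

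For existence, I would introduce the potential
\[ f(x):=\tfrac12\lng x,Tx\rng+\tfrac12\|x^+\|^2-\lng b,x\rng. \]
The first step is to verify that $f\in C^1$ with $\nabla f(x)=Tx+x^+-b=F(x)$, where $F$ is as in \eqref{eq:fucpw}; the only term that looks nonsmooth is $\tfrac12\|x^+\|^2=\tfrac12\sum_{i=1}^n(x_i^+)^2$, but each summand has the continuous partial derivative $x_i^+$, so the gradient identity holds. The second step is to note that $f$ is strongly convex: $\tfrac12\lng x,Tx\rng$ is strongly convex because $T$ is symmetric positive definite, $\tfrac12\|x^+\|^2$ is convex since each $t\mapsto(t^+)^2$ is convex, and $-\lng b,x\rng$ is affine. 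From $f(x)\ge\tfrac{\mu}{2}\|x\|^2-\|b\|\,\|x\|$ one sees that $f$ is coercive, hence it attains a global minimum at some $\bar x\in\R^n$, and $C^1$ stationarity gives $0=\nabla f(\bar x)=F(\bar x)$, so $\bar x$ solves \eqref{eq:pwls}.

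I expect the existence part to be the delicate point, the two things needing care being the differentiability identity $\nabla(\tfrac12\|x^+\|^2)=x^+$ and the justification that the minimum is attained. An alternative route that bypasses the potential is to observe that $F$ is continuous and strongly monotone, since $\lng F(x)-F(y),\,x-y\rng\ge\mu\|x-y\|^2$ by the same two ingredients used above, and then to invoke the surjectivity of continuous strongly monotone maps on $\R^n$; I would nonetheless favor the variational argument, as it is self-contained and yields both existence and uniqueness at once through the unique minimizer of the strongly convex function $f$.
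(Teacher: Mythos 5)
Your proof is correct, but it takes a genuinely different route from the paper's. The paper establishes existence and uniqueness in a single stroke via the contraction mapping principle (Theorem \ref{fixedpoint}): writing $x=x^+-x^-$, the solutions of \eqref{eq:pwls} are exactly the fixed points of $\Phi(x):=(T+{\Id})^{-1}(b-x^-)$, and since $\|(T+{\Id})^{-1}\|_2=1/(1+\lambda_1(T))<1$ (symmetry enters through the spectral characterization of the $2$-norm) while $x\mapsto x^-$ is nonexpansive, $\Phi$ is a contraction. Your uniqueness argument --- subtracting the two equations, pairing against $x-y$, and invoking Proposition \ref{prop_xpy-ypx} together with positive definiteness --- is essentially the paper's own Remark following the proposition, so that half is anticipated in the text. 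Your existence argument is the genuinely new part: you exhibit the solution as the unique minimizer of the strongly convex, coercive, $C^1$ potential $f(x)=\tfrac12\lng x,Tx\rng+\tfrac12\|x^+\|^2-\lng b,x\rng$ with $\nabla f=F$, where symmetry of $T$ enters to make $F$ a gradient field, and the identity $\nabla\bigl(\tfrac12\|x^+\|^2\bigr)=x^+$ and the coercivity bound $f(x)\ge\tfrac{\mu}{2}\|x\|^2-\|b\|\,\|x\|$ are both verified correctly. As for what each approach buys: the paper's argument is constructive, yielding a Picard iteration with explicit linear rate $1/(1+\lambda_1(T))$, and stays within the fixed-point toolkit the paper already set up; yours exposes the variational structure linking \eqref{eq:pwls} to the quadratic program \eqref{quad_problem} and delivers both conclusions at once through the unique minimizer of a strongly convex function. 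Your alternative remark --- continuity plus strong monotonicity of $F$ implies surjectivity --- is also valid and, unlike the potential argument, would cover nonsymmetric positive definite $T$, but it rests on a heavier surjectivity theorem (Minty--Browder type), whereas both the paper's proof and your variational one are elementary and self-contained.
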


\begin{proof}

Note first that the matrix $T + {\Id}$ is also symmetric and positive definite. Hence, $(T + {\Id})^{-1}$ exists. For any symmetric matrix $M$ we denote $\lambda_1(M) \leq \cdots \leq \lambda_n(M)$ the ordered eigenvalues of $M$. The matrix $T + {\Id}$ also satisfies $\lambda_i(T + {\Id}) = 1 + \lambda_i(T)$, $i=1,\dots,n$.
Since $\lambda_{1}(T)> 0$, we deduce that
\begin{align*}
\|(T + {\Id})^{-1}\|_2 &= \lambda_{n}((T + {\Id})^{-1}) \\
&= \frac{1}{\lambda_{1}(T + {\Id})} \\
&= \frac{1}{\lambda_{1}(T) + 1} \\
&< 1.
\end{align*} Defining the function $\Phi(x) := (T + {\Id})^{-1}(b-x^-)$ and using the decomposition $x = x^+ - x^-$ we can easily show that the solutions of problem \eqref{eq:pwls} coincide with the fixed points of $\Phi$. Moreover, $\Phi$ is a contraction and then has a unique fixed point. Indeed,
\begin{align*}
\| \Phi(x) - \Phi(y) \|_2 &= \|(T + {\Id})^{-1}(y^--x^-)\|_2 \\
&\leq \|(T + {\Id})^{-1}\|_2 \|(y^--x^-)\|_2 \\
&\leq \|(T +{\Id})^{-1}\|_2 \|x-y\|_2 \\
&=\frac{1}{\lambda_{1}(T) + 1} \|x-y\|_2,
\end{align*} 
using that $x^-$ and $y^-$, the projections of $x$ and $y$ onto $\R^n_{-}$ (a closed and convex set), are non-expansive in the last inequality. Thus, the result follows from Theorem \ref{fixedpoint}.\end{proof}
\begin{remark}[Another proof of the uniqueness of solutions]
The uniqueness of the solution of equation \eqref{eq:pwls} when $T$ is positive definite showed in Theorem \ref{uniqueness-sol} can also be deduced using the fact
\begin{equation} \label{eq:xpx-ypy}
(y-x)^T (P(x)x-P(y)y) \leq 0,
\end{equation}
as seen in Proposition \ref{prop_xpy-ypx}. Indeed, if there exist two solutions $x$ and $y$ of problem \eqref{eq:pwls}, we have by definition $x^++Tx = b$, 
$y^++Ty = b$. Subtracting those equations and multiplying the resulting equation by $(y-x)^T$, we get that
$$ (y-x)^T T(y-x) = (y-x)^T (x^+-y^+) \leq 0, $$ where the inequality follows from \eqref{eq:xpx-ypy} noting that $P(x)x=x^+$ and $P(y)y=y^+$. Thus, when $T$ is positive definite it must hold that $x=y$.  
\end{remark}

Due to the nature of iteration \eqref{eq:newtonc2pw}, we have that the sequence $(x^{k})_{k\in\NN}$ has only a finite number of different elements. This happens since the set $S := \{(P(x)+T)^{-1}b \text{ ; } x \in \R^n\}$ has at most $2^n$ different elements and $(x^{k})_{k\in\NN} \subseteq S$. The conclusion of this observation is that we have only two possible outcomes: the method converges in a finite number of steps, or it cycles. Note also that when $\bar{x}$ solves equation \eqref{eq:pwls}, we have $\bar{x}=(P(\bar{x})+T)^{-1}b$. Hence, by \eqref{eq:newtonc2pw}, $x^{k+1}=\bar{x}$ holds whenever $P(x^{k})=P(\bar{x})$. Thus, the problem amounts to finding a point in the same orthant as a solution. Let us first show that the method can only cycle among three or more points.

\begin{theorem} [Newton does not cycle between two points] \label{no-cycles-with-two-elements} Let $T\in\R^{n\times n}$ be a symmetric and positive definite matrix and $b\in\R^n$, then  
the sequence generated by the semi-smooth Newton method \eqref{eq:newtonc2pw} does not cycle between two points.
\end{theorem}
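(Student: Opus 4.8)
The plan is to argue by contradiction. Suppose the sequence generated by \eqref{eq:newtonc2pw} eventually cycles between two distinct points $x,y\in\R^n$. By the very definition of the iteration, such a two-cycle forces the two Newtonian relations $(P(x)+T)y=b$ and $(P(y)+T)x=b$ to hold simultaneously: applying one step at $x$ must return $y$, and applying one step at $y$ must return $x$. (Here I invoke the observation made just before the theorem, that the iterates lie in the finite set $S$ and that a point in the same orthant as a solution is mapped directly to that solution, so a genuine cycle is made of non-solution points satisfying these two equations exactly.)

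First I would subtract the two equations to eliminate the fixed right-hand side $b$. This yields $T(y-x)=P(y)x-P(x)y$, which expresses $T(y-x)$ purely in terms of the projection-sign matrices $P(\cdot)$ evaluated at the two cyclic points. The next idea is to test this identity against the difference vector $y-x$: multiplying on the left by $(y-x)^T$ gives $(y-x)^T T(y-x)=(y-x)^T(P(y)x-P(x)y)$.

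The right-hand side is exactly the quantity controlled by Proposition \ref{prop_xpy-ypx}, whose ``in particular'' conclusion yields $(y-x)^T(P(y)x-P(x)y)\leq 0$. On the other hand, since $T$ is symmetric and positive definite, the left-hand side satisfies $(y-x)^T T(y-x)>0$ whenever $x\neq y$. Combining the two inequalities forces $y=x$, contradicting the assumption that the cyclic points are distinct, and thereby proving that no two-cycle can occur.

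The substance of the argument is really the pair of sign-monotonicity inequalities already isolated in Proposition \ref{prop_xpy-ypx}; once those are in hand, the contradiction follows immediately from positive definiteness. I therefore do not anticipate any serious obstacle beyond bookkeeping: one must correctly match the orientation of the projection term $P(y)x-P(x)y$ (rather than $P(x)y-P(y)x$) to the exact sign in the ``in particular'' statement of Proposition \ref{prop_xpy-ypx}, so that the estimate on the right-hand side and the strict positivity of $(y-x)^T T(y-x)$ point in opposite directions and close the argument.
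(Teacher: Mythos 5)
Your proposal is correct and is essentially identical to the paper's own proof: both subtract the two cycle equations $(P(x)+T)y=b$ and $(P(y)+T)x=b$ to get $T(y-x)=P(y)x-P(x)y$, multiply by $(y-x)^T$, invoke the ``in particular'' inequality $(y-x)^T(P(y)x-P(x)y)\leq 0$ from Proposition~\ref{prop_xpy-ypx}, and conclude $x=y$ from positive definiteness of $T$. Your attention to the orientation of the projection term is well placed, and you matched it correctly.
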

\begin{proof}
If the sequence generated by iteration \eqref{eq:newtonc2pw} cycles between two points $x$ and $y$, then 
$$ (P(x)+T)y = b,$$
$$ (P(y)+T)x = b.$$ So, we have that $T(y-x) = P(y)x-P(x)y$. Multiplying by $(y-x)^T$, we obtain from Proposition \ref{prop_xpy-ypx} that
$$ (y-x)^T T(y-x) = (y-x)^T(P(y)x-P(x)y) \leq 0,$$ which implies $x=y$.
\end{proof}

When $n\leq2$, let us show that the sequence generated by method \eqref{eq:newtonc2pw} in fact does not cycle.

\begin{theorem}[Finite convergence for low dimensions] \label{teo_finite_conv}
Let $b \in \R^n$ and $T\in\R^{n\times n}$ be a symmetric and positive definite matrix. Then, iteration \eqref{eq:newtonc2pw} has global and finite convergence for $n=1,2$.
\end{theorem}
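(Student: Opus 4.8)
The plan is to exploit the observation, already recorded in the text, that the whole trajectory lives in the finite set $S=\{(P(x)+T)^{-1}b \,:\, x\in\R^n\}$, which has at most $2^n$ elements, so the method either converges in finitely many steps or eventually cycles. I would first recast the dynamics combinatorially: to each point associate its sign pattern $s=\sgn(x^+)\in\{0,1\}^n$, so that the iteration induces a map $g$ on the (at most $2^n$) sign patterns, sending the pattern of $x^k$ to the pattern of $x^{k+1}=(P(x^k)+T)^{-1}b$. A fixed point of $g$ is exactly a self-consistent pattern, i.e. a solution; by Proposition \ref{uniqueness-sol} there is precisely one such fixed point $\bar s$, and any trajectory that ever reaches $\bar s$ has converged (cf. the discussion preceding Theorem \ref{no-cycles-with-two-elements}). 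Hence a non-terminating trajectory must enter a cycle of $g$ that avoids $\bar s$, and by Theorem \ref{no-cycles-with-two-elements} no two-cycle exists.

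For $n=1$ this finishes the argument immediately: $g$ acts on only two patterns, one of which is $\bar s$, so the other maps into $\bar s$ and the method converges. For $n=2$, $g$ acts on the four patterns $00,01,10,11$; a genuine cycle must avoid the fixed pattern $\bar s$, hence involves at most three of the four patterns, and since two-cycles are excluded the only surviving possibility is a three-cycle among the three non-fixed patterns. Thus the entire $n=2$ case reduces to the single claim that $g$ admits no three-cycle.

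To rule out a three-cycle $x\to y\to z\to x$, I would use an energy interpretation of the iteration. Writing $f(x)=\tfrac12 x^TTx+\tfrac12\|x^+\|^2-b^Tx$, whose gradient is exactly $F$, so that $f$ is strictly convex with a unique critical point coinciding with the solution, and $q_s(x)=\tfrac12 x^T(T+\diag(s))x-b^Tx$, one checks that $f=q_s$ on the region with pattern $s$ and that the Newton step from $x^k$ is precisely the unconstrained minimization of the quadratic piece $q_{s^k}$. This yields the per-step identity
\begin{equation*}
f(x^k)-f(x^{k+1})=\tfrac12(x^{k+1}-x^k)^T\big(T+P(x^k)\big)(x^{k+1}-x^k)+\big(q_{s^k}(x^{k+1})-f(x^{k+1})\big).
\end{equation*}
Summing around the cycle makes the left-hand side telescope to zero, giving $0=A+B$, where $A=\tfrac12\sum_k(x^{k+1}-x^k)^T(T+P(x^k))(x^{k+1}-x^k)>0$ strictly, since $T$ is positive definite and the three points are distinct, while $B=\sum_k\big(q_{s^k}(x^{k+1})-f(x^{k+1})\big)=\tfrac12\sum_k\sum_i(s^k_i-s^{k+1}_i)(x^{k+1}_i)^2$. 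Hence a three-cycle would force $B=-A<0$. Every contribution to $B$ is nonnegative except when a coordinate flips from $\le 0$ at $x^k$ to $>0$ at $x^{k+1}$, so the cycle must contain such an up-flip.

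The main obstacle is to show that these up-flips cannot conspire to make $B$ negative when $n=2$. Since the relevant inequality fails coordinate-by-coordinate for arbitrary triples, I expect to need the explicit relations $y=(P(x)+T)^{-1}b$, $z=(P(y)+T)^{-1}b$, $x=(P(z)+T)^{-1}b$ together with the sign inequalities of Proposition \ref{prop_xpy-ypx}, carried out as a finite case analysis over the $\binom{4}{3}$ choices of the three non-fixed patterns and their two cyclic orientations (reducible by the symmetries of relabeling the two coordinates and of flipping signs). In each configuration exactly one coordinate is shared by two of the three patterns, and tracking the single up-flip and down-flip per coordinate should force $B\ge 0$, contradicting $B<0$. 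This is precisely the step that has no analogue for $n\ge 3$: with at least three coordinates the flip contributions to $B$ can genuinely be negative, which is what leaves room for the cycling behavior exhibited by the counterexample elsewhere in the paper.
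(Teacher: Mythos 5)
Your reduction is sound and, up to a point, follows the same skeleton as the paper: the trajectory lives in a finite set of at most $2^n$ points, the unique solution guaranteed by Proposition \ref{uniqueness-sol} corresponds to the unique fixed sign pattern, any trajectory reaching that pattern has converged, two-point cycles are excluded by Theorem \ref{no-cycles-with-two-elements}, and hence for $n=2$ everything hinges on excluding a three-cycle among the three non-solution patterns. Your energy identity is also correct: with $f(x)=\tfrac12 x^TTx+\tfrac12\|x^+\|^2-b^Tx$ and $q_s(x)=\tfrac12 x^T(T+\diag(s))x-b^Tx$, summing the per-step identity around a putative cycle does give $0=A+B$ with $A>0$, so a cycle forces $B<0$.

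The genuine gap is that the proof stops exactly where it must deliver: you never establish $B\ge 0$ for $n=2$. The phrases ``I expect to need'' and ``should force $B\ge 0$'' flag an unproven claim, and that claim is the entire content of the theorem. Because of the paper's three-dimensional counterexample, any correct argument must exploit the $n=2$ structure at precisely this point, and pattern bookkeeping alone cannot do it: $B$ compares squared magnitudes $(x^{k+1}_i)^2$ at different cycle points, information the sign patterns do not carry, so one must extract quantitative consequences of the cycle equations themselves. The paper does exactly this, and more directly than your plan: writing the cycle as $(P(x)+T)y=b$, $(P(y)+T)z=b$, $(P(z)+T)x=b$, subtracting two of these gives $T(y-x)=P(z)x-P(x)y$; after a cyclic relabeling one may assume $x$ and $y$ lie in opposite quadrants (so $P(x)+P(y)=\Id$), whence $z$ agrees with $x$ in exactly one sign coordinate, and then multiplying by $(y-x)^T$ and applying the sign analysis of Proposition \ref{prop_xpy-ypx} coordinate-by-coordinate yields $(y-x)^TT(y-x)\le 0$, contradicting positive definiteness. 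Your energy framework is an appealing explanation of why cycling can only occur in higher dimensions (up-flips can make $B$ negative), but as submitted it is a plan, not a proof, at its decisive step; to finish it you would in effect have to redo the paper's computation inside each of your cases.
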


\begin{proof}
The case $n=1$ is a direct consequence of Proposition \ref{diag_case}. When $n=2$, note that the sequence of Newton iterates has at most four different elements, however, no cycle of size four is possible, as these points are necessarily in four different quadrants of $\R^2$, coinciding in sign with the solution (which necessarily exists due to Proposition \ref{uniqueness-sol}), which implies convergence.

Now, suppose that there is a cycle among three different points $x,y,z$, different from $\bar{x}$, the unique solution. Clearly, these four points must lie in different quadrants of $\R^2$. Without loss of generality, let us assume that $x$ and $y$ lie in opposite quadrants, in the sense that $P(x)+P(y)=\Id$, and that the points satisfy the following equations: 
$$ (P(x)+T)y=b,$$
$$ (P(y)+T)z=b,$$
$$ (P(z)+T)x=b.$$ We have that $T(y-x) = P(z)x-P(x)y$, and therefore, multiplying by $(y-x)^T$, we obtain
\begin{align*}
(y-x)^T T(y-x) &= (y-x)^T(P(z)x-P(x)y)\\
&= (y_1-x_1)(\sgn(z_1^+)x_1-\sgn(x_1^+)y_1) + (y_2-x_2)(\sgn(z_2^+)x_2-\sgn(x_2^+)y_2).
\end{align*} Since $z$ is not in the opposite quadrant of $x$ we only have two options, $\sgn(z_1^+) = \sgn(x_1^+)$ and $\sgn(z_2^+) \neq \sgn(x_2^+)$, or $\sgn(z_1^+) \neq \sgn(x_1^+)$ and $\sgn(z_2^+) = \sgn(x_2^+)$. In both cases it can be checked that $(y-x)^T T(y-x) \leq 0$, which leads to a contradiction. The result now follows from Theorem \ref{no-cycles-with-two-elements}.
\end{proof}

We ran extensive numerical experiments in our pursuit of understanding the finite convergence of the semi-smooth Newton method \eqref{eq:newtonc2pw}. For symmetric and positive definite matrices $T\in\R^{n\times n}$ ($1000$ randomly generated problems for each dimension $n$), we recorded the number of iterations that \eqref{eq:newtonc2pw} needed to converge at each dimension. The results are shown below in Table \ref{tab:NumericalExperimentsGen}, where the percentage of problems solved for several different problem dimensions and the corresponding iterations are presented. 
\begin{table}[h!]
\centering
\begin{tabular}{|l|l|l|l|l|l|}
\hline
$n$/\%& 1 iter. & 2 iter. & 3 iter. & 4 iter. & 5 iter.\\ \hline
4 & 7.2 & 49.1 & 35.9 & 7 & 0.8\\ \hline
8 & 0.6 & 37.7 & 48.8 & 11.4 & 1.5\\ \hline
16 & 0 & 16.6 & 63.1 & 19.5 & 0.8\\ \hline
32 & 0 & 6 & 69 & 24.1 & 0.9\\ \hline
64 & 0 & 1.6 & 68.5 & 29.4 & 0.5\\ \hline
128 & 0 & 0.3 & 60.1 & 39.1 & 0.5\\ \hline
256 & 0 & 0 & 57.3 & 41.9 & 0.8\\ \hline
512 & 0 & 0 & 50.9 & 48.8 & 0.3\\ \hline
1024 & 0 & 0 & 43 & 56.9 & 0.1\\ \hline
2048 & 0 & 0 & 36.6 & 63.2 & 0.2\\ \hline
4096 & 0 & 0 & 30.6 & 69.2 & 0.2\\ \hline
\end{tabular}
\caption{Percentage of positive definite problems solved at each amount of iterations (iter.) for different dimensions ($n$).}
\label{tab:NumericalExperimentsGen}
\end{table}

We observe that the higher the dimension, the more iterations are needed. Surprisingly, the method performs exceptionally well as the required number of iterations grows very slowly (always capped by $5$) with respect to the growth of the dimension $n$. Indeed, for any $n\leq 4096$, no problem needed more than five iterations of \eqref{eq:newtonc2pw} to be solved.
\begin{table}[h!]
\centering
$$\begin{tabular}{|l|l|l|l|}
\hline
$n$/\% & 1 iter. & 2 iter. & 3 iter.\\ \hline
4 & 0 & 98.9 & 1.1\\ \hline
8 & 0 & 100 & 0\\ \hline
16 & 0 & 100 & 0\\ \hline
32 & 0 & 100 & 0\\ \hline
64 & 0 & 100 & 0\\ \hline
128 & 0 & 99.9 & 0.01\\ \hline
256 & 0 & 98.9 & 1.1\\ \hline
512 & 0 & 95 & 5\\ \hline
1024 & 0 & 86.6 & 13.4\\ \hline
2048 & 0 & 74.8 & 25.2\\ \hline
4096 & 0 & 54.9 & 45.1\\ \hline
\end{tabular}$$
\caption{Numerical experiments for ``almost'' diagonal positive definite matrices.}
\label{tab:NumericalExperimentsDiag}
\end{table}

In the tests illustrated in Table \ref{tab:NumericalExperimentsDiag}, the matrix $T$ is symmetric, positive definite, and ``almost'' diagonal, \emph{i.e.}, the elements of the diagonal are much greater compared with the off-diagonal entries. The diagonal entries are of the order of thousands, while the off-diagonal elements are smaller than one. Here, as suggested by Proposition \ref{diag_case}, no problem required more than three iterations to reach the solution.

Although the numerical tests based on random data suggest that the semi-smooth Newton method does not cycle, we were able to find a particular example of problem \eqref{eq:pwls} that shows that the Newton iteration \eqref{eq:newtonc2pw} may cycle among three points in $\R^3$ even when $T$ is symmetric and positive definite. This gives us a counterexample to the conjecture raised in \cite{Bello-Cruz:2017} on the global and finite convergence of the Newton iteration \eqref{eq:newtonc2pw} under this assumption.

\begin{example}[Counterexample on the finite convergence]
The semi-smooth Newton method \eqref{eq:newtonc2pw} fails to converge in the case $n=3$ with the following symmetric and positive definite matrix
\begin{equation}\label{data-conter}
T = \frac{1}{100}
\left(\begin{matrix}
32 & -26 & 21\\
-26 & 33 & -23\\
21 & -23 & 17
\end{matrix}\right)
\quad 
\mbox{and}\quad 
b = \frac{1}{100}
\left(\begin{matrix}
18\\
-48\\
30
\end{matrix}\right).
\end{equation}

It can be easily checked that the following points $x,y$ and $z$ conform a cycle of the method.
\[
x =
\left(\begin{matrix}
\frac{319}{1435}\\
\\
-\frac{1849}{6379}\\
\\
\frac{190}{1191}
\end{matrix}\right)
,\quad 
y =
\left(\begin{matrix}
-\frac{527}{2978}\\
\\
-\frac{1490}{923}\\
\\
-\frac{81}{2777}
\end{matrix}\right)
, \quad 
z =
\left(\begin{matrix}
-\frac{306}{95}\\
\\
\frac{18}{95}\\
\\
6
\end{matrix}\right).
\]
Indeed, those points satisfy the cycle equations
$$ (P(x)+T)y=b,$$
$$ (P(y)+T)z=b,$$
$$ (P(z)+T)x=b,$$ which proves the statement.\end{example}

\section{Jacobi-Newton and Gauss-Seidel-Newton methods}

Based on the well-known Jacobi and Gauss-Seidel methods for solving linear systems, we define and analyze two novel methods, which we call Jacobi-Newton and Gauss-Seidel-Newton methods, for solving problem \eqref{eq:pwls}. We start with two definitions related to classical diagonal dominance and Sassenfeld's criterion.

\begin{definition}[Strong diagonal dominance]\label{strong_diag_cond}
Let $T=(t_{ij})\in\R^{n\times n}$. We say that $T$ is strongly diagonal dominant if
$$ \frac{1}{| t_{ii} |}\left(1+\sum_{\substack{j=1 \\ j \neq i}}^n |t_{ij}| \right) <1,\quad\forall i=1,\dots,n.$$
\end{definition}

Note that if $T$ is strongly diagonal dominant, then $T$ is diagonal dominant. However, we need this stronger condition to ensure the global convergence of the Jacobi-Newton method, which will be presented later in \eqref{jaco_newton}. 

We now introduce a weaker condition for $T$, which is a variation of the classical Sassenfeld's condition. 

\begin{definition}[Strong Sassenfeld's condition]\label{strong_sassen_cond}
Let $T=(t_{ij})\in\R^{n\times n}$. Define $\beta_i$ and $\beta$ as follows
\begin{equation}\label{beta-1} \beta_1 := \frac{1}{| t_{11} |}\left(1+\sum_{j=2}^n |t_{1j}| \right),\end{equation}
\begin{equation}\label{beta-i} \beta_i := \frac{1}{| t_{ii} |}\left(\sum_{j=1}^{i-1}|t_{ij}|\beta_j+\sum_{j=i+1}^n|t_{ij}| + 1\right),\quad\forall i=2,\dots,n,\end{equation}
and
$$ \beta := \max_{i=1,\dots,n.}\beta_i. $$ We say that $T$ satisfies the strong Sassenfeld's condition if $\beta < 1$.
\end{definition}
It is easy to see that if $T$ is strongly diagonal dominant, then $T$ satisfies the strong Sassenfeld's condition. However, the converse implication is not true in general. Now we prove the existence and uniqueness of solutions for problem \eqref{eq:pwls} under the strong Sassenfeld's condition. This criterion will also be used later to prove the convergence of the Gauss-Seidel-Newton method, which we will introduce.

\begin{theorem}[Existence and uniqueness of solutions under strong Sassenfeld's condition]\label{exis_uniq_sassenf}
Let $b \in \R^n$ and $T\in\R^{n\times n}$. If $T$ satisfies the strong Sassenfeld's condition, then equation \eqref{eq:pwls} has one and only one solution.
\end{theorem}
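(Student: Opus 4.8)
The plan is to recast \eqref{eq:pwls} as a fixed-point equation for a Gauss--Seidel-type map built directly from the splitting $T = L + D + U$, where $D = \diag(t_{11},\dots,t_{nn})$ and $L$, $U$ are the strict lower- and upper-triangular parts of $T$, and then to invoke the contraction mapping principle (Theorem \ref{fixedpoint}). First I would observe that the strong Sassenfeld's condition forces $t_{ii}\neq 0$ for every $i$ (indeed $\beta_i<1$ gives $|t_{ii}|>1$), so $D+L$ is a nonsingular lower-triangular matrix and the map
$$\Phi(x) := (D+L)^{-1}\bigl(b - x^+ - Ux\bigr)$$
is well defined. Writing this out as a forward substitution, $\Phi(x)_i = \frac{1}{t_{ii}}\bigl(b_i - x_i^+ - \sum_{j<i} t_{ij}\Phi(x)_j - \sum_{j>i} t_{ij} x_j\bigr)$, one checks immediately that $x$ is a fixed point of $\Phi$ if and only if $x_i^+ + \sum_j t_{ij} x_j = b_i$ for all $i$, that is, if and only if $x$ solves \eqref{eq:pwls}. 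Thus existence and uniqueness of solutions is equivalent to existence and uniqueness of a fixed point of $\Phi$.

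The heart of the argument is to show that $\Phi$ is a contraction in the maximum norm $\|\cdot\|_\infty$, with the coefficients $\beta_i$ appearing exactly as the Sassenfeld recursion. Fix $x,y$, let $d := \|x-y\|_\infty$, and abbreviate $\bar x := \Phi(x)$, $\bar y := \Phi(y)$. Subtracting the forward-substitution formulas and using the triangle inequality gives, for each $i$,
$$|\bar x_i - \bar y_i| \le \frac{1}{|t_{ii}|}\Bigl(|x_i^+ - y_i^+| + \sum_{j<i}|t_{ij}|\,|\bar x_j - \bar y_j| + \sum_{j>i}|t_{ij}|\,|x_j - y_j|\Bigr).$$
The crucial point is that the scalar projection $t \mapsto t^+$ is nonexpansive, so $|x_i^+ - y_i^+| \le |x_i - y_i| \le d$; this is precisely what produces the constant $1$ in the numerators of \eqref{beta-1}--\eqref{beta-i}. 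Proceeding by induction on $i$ (using $|x_j - y_j| \le d$ together with the inductive bound $|\bar x_j - \bar y_j| \le \beta_j d$ for $j<i$) yields $|\bar x_i - \bar y_i| \le \beta_i d$ for every $i$, and hence $\|\Phi(x)-\Phi(y)\|_\infty \le \beta\, \|x-y\|_\infty$ with $\beta = \max_i \beta_i < 1$.

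Finally, I would conclude by the contraction mapping principle. One mild technical point to address is that Theorem \ref{fixedpoint} is phrased for the Euclidean norm, whereas the estimate above is in $\|\cdot\|_\infty$; since $(\R^n,\|\cdot\|_\infty)$ is a complete normed space, Banach's theorem applies verbatim and delivers a unique $\bar x$ with $\Phi(\bar x) = \bar x$, which is then the unique solution of \eqref{eq:pwls}. I expect the main obstacle to be organizational rather than deep: choosing the correct Gauss--Seidel splitting (keeping $D = \diag(t_{ii})$, rather than the shifted diagonal $1 + t_{ii}$ used in Proposition \ref{uniqueness-sol}) so that the induced componentwise estimate reproduces the Sassenfeld constants $\beta_i$ exactly, and ensuring that the ``$+1$'' arising from nonexpansiveness of the projection lands in the right place in the recursion. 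The norm mismatch with the cited fixed-point theorem is the only genuinely delicate bookkeeping step.
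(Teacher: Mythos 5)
Your proposal is correct and follows essentially the same route as the paper: the identical Gauss--Seidel splitting $T=L+D+U$, the same fixed-point map $(D+L)^{-1}(b-x^+-Ux)$, the same inductive componentwise estimate producing the Sassenfeld constants $\beta_i$ via nonexpansiveness of the projection, and the same appeal to the contraction mapping principle. Your remark about the $\|\cdot\|_\infty$ versus Euclidean-norm phrasing of Theorem \ref{fixedpoint} is a point the paper silently glosses over, and you resolve it correctly.
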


\begin{proof}

To prove the existence and uniqueness of the solution, we will use the contraction mapping principle (Theorem \ref{fixedpoint}). Let us first decompose the matrix $T$ as a sum of $L+D+U$ where $D$ is the diagonal of $T$ and $L$ and $U$ are the strictly lower and upper parts of $T$, respectively.  We first define the mapping $\psi:\R^n\to \R^n$ such that 
$$ \psi(x) := (D+L)^{-1}(-Ux+b-x^+), \quad \forall x \in \R^n.$$
Then, we prove that the fixed points of $\psi$ are solutions of problem \eqref{eq:pwls} and also that it is a contraction. Indeed, note that $x=\psi(x)$ is equivalent to $(D+L)x=-Ux+b-x^+$, or $(D+L+U)x+x^{+}=b$. Hence, $x$ is solution of equation \eqref{eq:pwls} given that $T=D+L+U$.

On the other hand, to prove the contraction property of $\psi$ consider  any $x,y \in \R^n$ and define $u:=\psi(x)$ and $w:=\psi(y)$. Note that 
$$ u-w = \psi(x)-\psi(y) = (D+L)^{-1}(-U(x-y)+y^+-x^+),$$ which is equivalent to
$$ (D+L)(u-w) = -U(x-y)+y^+-x^+.$$
Hence,
\begin{equation*}
u_1-w_1 = \frac{1}{t_{11}}\left( -\sum_{j=2}^n t_{1j}(x_j-y_j) + y_1^+-x_1^+ \right),
\end{equation*} and after taking the absolute value in both sides and using its triangular inequality property, we have 
\begin{align*}
|u_1-w_1| &\leq \frac{1}{|t_{11}|}\left( \sum_{j=2}^n |t_{1j}||x_j-y_j| + |x_1^+-y_1^+| \right)\\
&\leq \frac{1}{|t_{11}|}\left( \sum_{j=2}^n |t_{1j}||x_j-y_j| + |x_1-y_1| \right)\\
&\leq \frac{1}{|t_{11}|}\left( \sum_{j=2}^n |t_{1j}| + 1\right) \|x-y\|_{\infty} \\
&= \beta_1 \|x-y\|_{\infty}.
\end{align*} The second inequality follows from the nonexpansiveness of the projections and the last equality by the definition of $\beta_1$ given in \eqref{beta-1}.
By induction let us assume that $|u_i-w_i| \leq \beta_i\|x-y\|_{\infty}$ for all $i=1,\dots,p-1,$ and we will show that $|u_p-w_p| \leq \beta_p\|x-y\|_{\infty}$. Analogously, 
\begin{equation*}
u_p-w_p = \frac{1}{t_{pp}}\left( -\sum_{j=1}^{p-1} t_{pj}(u_j-w_j) - \sum_{j=p+1}^n t_{pj}(x_j-y_j) + y_p^+-x_p^+ \right),
\end{equation*} which implies that
\begin{align*}
|u_p-w_p| &\leq \frac{1}{|t_{pp}|}\left(\sum_{j=1}^{p-1}|t_{pj}|\beta_j+1+\sum_{j=p+1}^n|t_{pj}| \right)\|x-y\|_{\infty}\\
&= \beta_p\|x-y\|_{\infty},
\end{align*} where we used the induction assumption and the nonexpansiveness of the projection, together with the definition of $\beta_p$ given in \eqref{beta-i} in the last equality.
Now, we are ready to prove the contraction property of $\psi$ by observing that for all $x,y \in \R^n$, we have
\begin{align*}
\| \psi(x)-\psi(y) \|_{\infty} &= \| u-w \|_{\infty} \\
&= \max_{i=1,\dots,n.}|u_i-w_i|\\
&\leq \beta\|x-y\|_{\infty}.
\end{align*} Thus, since $\beta < 1$, we have that $\psi$ is a contraction mapping and therefore has a unique fixed point.
\end{proof}

Clearly, if $T$ is strongly diagonal dominant, the result above is also true, \emph{i.e.}, problem \eqref{eq:pwls} has one and only one solution. A natural question that arises here is if this assumption can be relaxed to the classical diagonal dominance and if the existence and uniqueness of solutions would still hold for equation \eqref{eq:pwls}. Unfortunately, that is not true in general, as is shown in the following example.

\begin{example}[Diagonal dominance is not sufficient for existence of solutions] \label{teo_finite_conv_diagdom}
Let $b \in \R^n$ and $T$ a diagonal dominant matrix. Then, problem \eqref{eq:pwls} may fail to have solutions. Indeed, let us consider the following data 
\[
T = \frac{1}{100}
\left(\begin{matrix}
-26 & 16\\
23 & -33
\end{matrix}\right),
\quad
b = \frac{1}{100}
\left(\begin{matrix}
-12\\
12
\end{matrix}\right).
\]
Note that $T$ is diagonal dominant and the points $x,y$ and $z,w$ conform to two different cycles for the Newton iteration \eqref{eq:newtonc2pw} where
\[
x =
\frac{1}{2295}\left(\begin{matrix}
-498\\
582
\end{matrix}\right)
,\quad 
y =
\frac{1}{1055}\left(\begin{matrix}
498\\
18
\end{matrix}\right),
\]
and 
\[
z =
\frac{1}{245}\left(\begin{matrix}
102\\
-18
\end{matrix}\right), \quad
w =
-\frac{1}{1405}\left(\begin{matrix}
102\\
582
\end{matrix}\right).
\]
Namely, the cycle equations are satisfied: $(P(x)+T)y=b$, $(P(y)+T)x=b$ and $(P(z)+T)w=b$, $(P(w)+T)z=b$.
Thus, the method has two cycles of order two and since the points are in different quadrants we have that the equation has no solution (see the discussion before Theorem \ref{no-cycles-with-two-elements}).

\end{example}

To define the Jacobi-Newton iteration for solving problem \eqref{eq:pwls}, let us recall the decomposition of the matrix $T$ as a sum of $L+D+U$ where $D$ is the diagonal of $T$ and $L$ and $U$ are the strictly lower and upper parts of $T$, respectively. The iteration is defined as follows: Given $x^0\in\R^n$, 
\begin{equation}\label{jaco_newton}
x^{k+1} := -(P(x^{k})+D)^{-1}(L+U) x^{k} + (P(x^{k})+D)^{-1}b, \quad k\in\NN,
\end{equation}
or equivalently, $x^{k+1}$ is the solution of the diagonal system
\begin{equation}\label{jac_newton2}
(P(x^{k})+D)x^{k+1} = -(L+U) x^{k}+b.
\end{equation}

First, we prove that if the sequence generated by the Jacobi-Newton iteration \eqref{jaco_newton} converges to $\bar x\in\R^n$, then $\bar x$ is a solution of problem \eqref{eq:pwls}.

\begin{proposition}[Limit points of the Jacobi-Newton iteration are solutions]\label{conv-jacobi}
If the sequence $(x^{k})_{k\in\NN}$ generated by \eqref{jaco_newton} converges to $\bar x$, then $\bar x$ solves problem \eqref{eq:pwls}.
\end{proposition}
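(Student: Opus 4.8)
The plan is to pass to the limit as $k\to\infty$ in the defining relation \eqref{jac_newton2}, namely $(P(x^k)+D)x^{k+1}=-(L+U)x^k+b$, and to verify that the limit $\bar x$ satisfies $\bar x^+ + T\bar x = b$. Since $(x^{k+1})_{k\in\NN}$ is merely a shift of the convergent sequence $(x^k)_{k\in\NN}$, we also have $x^{k+1}\to\bar x$. The linear part is harmless: $-(L+U)x^k+b\to -(L+U)\bar x+b$ and $Dx^{k+1}\to D\bar x$ by continuity of linear maps. The delicate term is $P(x^k)x^{k+1}$, because $P(\cdot)=\diag(\sgn((\cdot)^+))$ involves the sign function and is therefore discontinuous exactly on the coordinate hyperplanes; one cannot simply assert $P(x^k)\to P(\bar x)$.

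I would therefore examine $P(x^k)x^{k+1}$ entrywise, the $i$-th component being $\sgn((x^k_i)^+)\,x^{k+1}_i$, and show it converges to $\bar x_i^+$. Recalling the identity $P(x)x=x^+$, so that $\bar x_i^+=\sgn((\bar x_i)^+)\bar x_i$, I split according to the sign of $\bar x_i$. If $\bar x_i>0$, then $x^k_i>0$ for all large $k$, the factor $\sgn((x^k_i)^+)$ stabilizes at $1$, and the product tends to $\bar x_i=\bar x_i^+$; if $\bar x_i<0$ the factor stabilizes at $0$ and the product tends to $0=\bar x_i^+$.

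The main obstacle is the boundary case $\bar x_i=0$, where $\sgn((x^k_i)^+)$ may oscillate between $0$ and $1$ and need not converge at all. The resolution is that this factor is uniformly bounded by $1$ while $x^{k+1}_i\to\bar x_i=0$, so the product still tends to $0=\bar x_i^+$ by a boundedness-times-null argument. Hence in all three cases $\sgn((x^k_i)^+)x^{k+1}_i\to\bar x_i^+$, giving $P(x^k)x^{k+1}\to\bar x^+$. Passing to the limit in \eqref{jac_newton2} then yields $\bar x^+ + D\bar x = -(L+U)\bar x+b$, and since $T=L+D+U$ this rearranges to $\bar x^+ + T\bar x = b$, so $\bar x$ solves \eqref{eq:pwls}, as claimed.
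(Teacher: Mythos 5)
Your proof is correct, but it is organized differently from the paper's. The paper does not pass to the limit in the recursion; instead it evaluates the residual $F(x^{k+1})$, with $F$ as in \eqref{eq:fucpw}, and uses \eqref{jac_newton2} to derive the identity $F(x^{k+1}) = (P(x^{k+1})-P(x^{k}))x^{k+1}+(L+U)(x^{k+1}-x^{k})$; it then shows both terms tend to zero (the second because $x^{k+1}-x^{k}\to 0$, the first by the same dichotomy you use) and concludes $F(\bar x)=0$ from the continuity of $F$. You instead prove the key claim $P(x^{k})x^{k+1}\to \bar x^{+}=P(\bar x)\bar x$ and pass to the limit term by term in \eqref{jac_newton2}. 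The delicate point is identical in both arguments, namely the discontinuity of $\sgn((\cdot)^{+})$ at components where $\bar x_i=0$, resolved in both cases by a bounded factor multiplying a null sequence; but the sign comparison differs: the paper compares consecutive iterates, $\sgn((x^{k+1}_i)^{+})$ versus $\sgn((x^{k}_i)^{+})$, while you compare $\sgn((x^{k}_i)^{+})$ against the sign pattern of the limit. What the paper's route buys is that it exhibits the residual norm $\|F(x^{k+1})\|$ going to zero, which ties in with the component-wise stopping criterion of Proposition \ref{gen_pk=pk+1} and is reused almost verbatim for the Gauss-Seidel-Newton variant; what your route buys is self-containedness: you never need the continuity of $F$ as a separate ingredient, since the limiting equation $\bar x^{+}+T\bar x=b$ falls out directly from the term-by-term limit.
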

\begin{proof}
First, we know that $F(x) = x^+ + Tx -b$ is continuous since $x^+$ is a continuous piecewise linear function and $Tx-b$ is linear and therefore continuous. Rewriting $F(x^{k+1})$ and using \eqref{jac_newton2}, we have
\begin{align*}
F(x^{k+1}) &= (P(x^{k+1})+T)x^{k+1}-b,\\
&= (P(x^{k+1})+T)x^{k+1}-(P(x^{k})+D)x^{k+1}-(L+U)x^{k},\\
&= (P(x^{k+1})+D+L+U)x^{k+1}-(P(x^{k})+D)x^{k+1}-(L+U)x^{k},\\
&= (P(x^{k+1})-P(x^{k}))x^{k+1}+(L+U)(x^{k+1}-x^{k}),
\end{align*} using \eqref{jac_newton2} in the last equality. Now, the above equality implies that
$$\|F(x^{k+1})\| \leq \|(P(x^{k+1})-P(x^{k}))x^{k+1}\|+\|L+U\|\|x^{k+1}-x^{k}\|.$$
Hence,
\begin{align*}
\lim_{k \to \infty} \|F(x^{k+1})\| &\leq \lim_{k \to \infty} \|(P(x^{k+1})-P(x^{k}))x^{k+1}\| + \lim_{k \to \infty} \|L+U\|\|x^{k+1}-x^{k}\|\\
&= \lim_{k \to \infty} \|(P(x^{k+1})-P(x^{k}))x^{k+1}\|.
\end{align*}

If $\bar x$ does not have any component equal to $0$, then the result is trivial since the sign becomes constant for a certain index of the sequence. On the other hand, if $\bar x_i = 0$ for some $i$, we have by the continuity of $F$ that
$$ |F_i(\bar x)| = \lim_{k \to \infty} |F_i(x^{k+1})| \leq \lim_{k \to \infty} |(\sgn((x^{k+1}_i)^+)-\sgn((x^{k}_i)^+))x^{k+1}_i| = 0, $$ where the last equality follows from the fact that the $i$-th component of $x^{k+1}$ goes to zero and $\sgn((x^{k+1}_i)^+)-\sgn((x^{k}_i)^+) \in \{-1,0,1\}$. Thus, all component of $F(\bar x)$ are zeroes, which implies the result. 
\end{proof}

Next, we show that $T$ being strongly diagonal dominant is a sufficient condition for the convergence of iteration \eqref{jaco_newton}.

\begin{theorem}[Global convergence of the Jacobi-Newton method]
Let $b \in \R^n$ and $T\in \R^{n\times n}$. If $T$ is strongly diagonal dominant, then the Jacobi-Newton method \eqref{jaco_newton} globally converges to the unique solution of problem \eqref{eq:pwls}.
\end{theorem}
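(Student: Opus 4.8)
The plan is to reduce the statement to a quantitative contraction estimate toward the unique solution. Existence and uniqueness come for free: since $T$ strongly diagonal dominant implies that $T$ satisfies the strong Sassenfeld's condition (as noted right after Definition \ref{strong_sassen_cond}), Theorem \ref{exis_uniq_sassenf} already provides a unique solution $\bar x$ of \eqref{eq:pwls}. Moreover, by Proposition \ref{conv-jacobi} any limit of the Jacobi--Newton sequence solves \eqref{eq:pwls}, hence must equal $\bar x$. Therefore it suffices to prove that $(x^k)_{k\in\NN}$ converges, and I would do this by establishing the stronger claim $\|x^{k+1}-\bar x\|_\infty \le \lambda\,\|x^k-\bar x\|_\infty$ for a constant $\lambda<1$ depending only on $T$; iterating then forces $x^k\to\bar x$ geometrically.

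To set up the estimate, note that $\bar x$ satisfies $(P(\bar x)+D)\bar x = -(L+U)\bar x + b$, the fixed-point form of \eqref{jac_newton2}. Subtracting this from \eqref{jac_newton2} and isolating the diagonal operator yields
\begin{equation*}
(P(x^{k})+D)(x^{k+1}-\bar x) = -(L+U)(x^{k}-\bar x) - (P(x^{k})-P(\bar x))\bar x .
\end{equation*}
The matrix $P(x^{k})+D$ is diagonal with $i$-th entry $\sgn((x^{k}_i)^+)+t_{ii}$, which is nonzero because $|\sgn((x^{k}_i)^+)+t_{ii}|\ge |t_{ii}|-1>\sum_{j\ne i}|t_{ij}|\ge 0$ under strong diagonal dominance; thus the iteration is well defined and I can read off each component. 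Bounding the off-diagonal part by $\sum_{j\ne i}|t_{ij}|\,\|x^{k}-\bar x\|_\infty$ and the sign-discrepancy term by the elementary inequality
\begin{equation*}
\big|(\sgn((x^{k}_i)^+)-\sgn((\bar x_i)^+))\,\bar x_i\big|\le |x^{k}_i-\bar x_i|\le \|x^{k}-\bar x\|_\infty ,
\end{equation*}
(proved by the same three-case sign analysis as Proposition \ref{prop_xpy-ypx}, and being the discrete analogue of the nonexpansiveness of $x\mapsto x^+$), I obtain a component-wise bound of the form $|x^{k+1}_i-\bar x_i|\le \frac{1+\sum_{j\ne i}|t_{ij}|}{|\sgn((x^{k}_i)^+)+t_{ii}|}\,\|x^{k}-\bar x\|_\infty$, whose numerator is exactly the strong diagonal dominance quantity from Definition \ref{strong_diag_cond}.

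The main obstacle is controlling the denominator jointly with the sign-discrepancy term. When $t_{ii}>0$, or whenever the signs of $x^{k}_i$ and $\bar x_i$ agree, the effective diagonal equals $|t_{ii}|$ (or the sign term vanishes), and strong diagonal dominance immediately yields the factor $\tfrac{1+\sum_{j\ne i}|t_{ij}|}{|t_{ii}|}<1$. The genuinely delicate configuration is $t_{ii}<-1$ together with $\sgn((x^{k}_i)^+)=1$ and $\sgn((\bar x_i)^+)=0$: there the effective diagonal drops to $|t_{ii}|-1$ while the unit sign-jump is simultaneously active, and the crude bound only gives $\tfrac{1+\sum_{j\ne i}|t_{ij}|}{|t_{ii}|-1}$, which strong diagonal dominance does not force below $1$. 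I expect this negative-diagonal, sign-straddling case to be where the real work lies, and I would attack it in one of three ways: (i) a sharper estimate that never charges both the reduced diagonal and the full unit sign-jump to the same index at once; (ii) a suitably weighted maximum norm; or (iii) a two-phase argument showing that such a straddling configuration cannot persist, so that after finitely many steps every component's sign agrees with that of $\bar x$, at which point the iteration becomes the classical Jacobi iteration for the linear system $(P+T)x=b$ with a fixed sign pattern $P$. Since $|P_{ii}+t_{ii}|\ge |t_{ii}|-1>\sum_{j\ne i}|t_{ij}|$ for every sign pattern, this linear system is strictly diagonally dominant, hence its Jacobi iteration converges, and uniqueness then forces the limit to be $\bar x$. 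Once a contraction factor $\lambda<1$ is secured in all cases, the geometric decay of $\|x^{k}-\bar x\|_\infty$ delivers the asserted global convergence, completing the proof.
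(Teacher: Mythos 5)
Your proposal reproduces, in its first two paragraphs, exactly the paper's argument: subtract the fixed-point identity $(P(\bar x)+D)\bar x=-(L+U)\bar x+b$ from \eqref{jac_newton2}, invert the diagonal matrix $P(x^k)+D$ (your observation that $|\sgn((x^k_i)^+)+t_{ii}|\ge|t_{ii}|-1>\sum_{j\ne i}|t_{ij}|$ guarantees well-definedness is correct, and is not even made explicit in the paper), bound the off-diagonal contribution by $\sum_{j\ne i}|t_{ij}|\,\|x^k-\bar x\|_\infty$ and the sign-discrepancy term by $|x^k_i-\bar x_i|$, and try to conclude an $\infty$-norm contraction toward the unique solution supplied by Theorem \ref{exis_uniq_sassenf}. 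The one configuration you could not close, namely $t_{ii}<-1$ together with $\sgn((x^k_i)^+)=1$ and $\sgn(\bar x_i^+)=0$, where the denominator degrades to $|1+t_{ii}|=|t_{ii}|-1$ and the crude bound only yields the factor $\bigl(1+\sum_{j\ne i}|t_{ij}|\bigr)/\bigl(|t_{ii}|-1\bigr)$, is a genuine gap in your write-up: none of your three repair strategies is carried out, and strategy (iii) has its own hole, since even if $P(x^k)=P(\bar x)$ at some iteration, the subsequent classical Jacobi iterates for $(P(\bar x)+T)x=b$ need not remain in that orthant, so the identification with a fixed linear Jacobi iteration is not automatic. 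As written, your proposal therefore proves the theorem only when all $t_{ii}>0$ (or, more precisely, $t_{ii}\ge-\frac{1}{2}$).

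What makes your proposal valuable, however, is that the case you flagged is precisely where the paper's own proof is defective. The paper's final inequality replaces $1/|\sgn((x^k_i)^+)+t_{ii}|$ by $1/|t_{ii}|$, which requires $|\sgn((x^k_i)^+)+t_{ii}|\ge|t_{ii}|$; for $\sgn((x^k_i)^+)=1$ this is equivalent to $t_{ii}\ge-\frac{1}{2}$, while Definition \ref{strong_diag_cond} permits $t_{ii}<-1$, in which case the inequality reverses. A one-dimensional instance already violates the paper's claimed one-step contraction: take $n=1$, $t_{11}=-\frac{3}{2}$, $b=1$, $x^0=\frac{1}{10}$; then $T$ is strongly diagonal dominant, $\bar x=-\frac{2}{3}$, $x^1=-2$, so $\|x^1-\bar x\|_\infty=\frac{4}{3}$, whereas the paper's bound asserts $\|x^1-\bar x\|_\infty\le\frac{2}{3}\cdot\frac{23}{30}<1$ (the iteration nonetheless converges at the next step, consistent with Proposition \ref{diag_case}). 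So the honest assessment is: for matrices with positive diagonal your outline and the paper's proof are complete and essentially identical; for matrices with entries $t_{ii}<-1$ neither argument establishes the theorem, and a correct proof (or a counterexample) in that regime would require genuinely new work along the lines of your options (i)--(iii), e.g.\ a two-step or weighted-norm analysis of the sign-straddling amplification. Your identification of this obstruction is the sharpest part of the proposal.
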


\begin{proof} Let $\bar x$ be the unique solution of problem \eqref{eq:pwls}. Then, $(P(\bar x)+D)\bar x = -(L+U) \bar x+b$, which combined with \eqref{jac_newton2} implies that 
\[(P(x^{k})+D)(x^{k+1}-\bar x) = -(L+U)(x^k-\bar x)+ P(\bar x)\bar x - P(x^{k})\bar x.\]
Hence, 
\begin{align*}
\|x^{k+1}-\bar x\|_{\infty} &= \|-(P(x^{k})+D)^{-1}[(L+U)(x^{k}-\bar x)+(P(\bar x)- P(x^{k}))\bar x]\|_{\infty}\\
&= \max_{i=1,\dots,n}\left|\frac{1}{\sgn((x^{k}_i)^+)+t_{ii}}\left( -\sum_{\substack{j=1 \\ j \neq i}}^n t_{ij}(x^{k}_j-\bar x_j)+\sgn(\bar x_i^+)\bar x_i - \sgn((x^{k}_i)^+)\bar x_i \right)\right|\\
&\leq \max_{i=1,\dots,n}\frac{1}{|\sgn((x^{k}_i)^+)+t_{ii}|}\left( \sum_{\substack{j=1 \\ j \neq i}}^n |t_{ij}||x^{k}_j-\bar x_j|+|\sgn(\bar x_i^+)\bar x_i - \sgn((x^{k}_i)^+)\bar x_i| \right).
\end{align*}
Note that 
\[|\sgn(\bar x_i^+)\bar x_i - \sgn((x^{k}_i)^+)\bar x_i|=\left\{\begin{matrix}0,& \sgn(\bar x_i^+)=\sgn((x^{k}_i)^+),\\|\bar x_i|,&  \sgn(\bar x_i^+)\neq \sgn((x^{k}_i)^+).\end{matrix}\right.\] 
Observe further that $|\sgn(\bar x_i^+)\bar x_i - \sgn((x^{k}_i)^+)\bar x_i|=|\bar x_i|\le |x^{k}_i-\bar x_i|$ when $\sgn(\bar x_i^+)\neq \sgn((x^{k}_i)^+)$ and $|\sgn(\bar x_i^+)\bar x_i - \sgn((x^{k}_i)^+)\bar x_i|=0\le |x^{k}_i-\bar x_i|$ if $\sgn(\bar x_i^+)= \sgn((x^{k}_i)^+)$. Therefore, $|\sgn(\bar x_i^+)\bar x_i - \sgn((x^{k}_i)^+)\bar x_i|\le |x^{k}_i-\bar x_i|$. 

Then,
\begin{align*}
\|x^{k+1}-\bar x\|_{\infty} &\leq \max_{i=1,\dots,n}\frac{1}{|\sgn((x^{k}_i)^+)+t_{ii}|}\left( \sum_{\substack{j=1 \\ j \neq i}}^n |t_{ij}||x^{k}_j-\bar x_j|+|x^{k}_i-\bar x_i| \right)\\
&\leq \left(\max_{i=1,\dots,n}\frac{1}{|\sgn((x^{k}_i)^+)+t_{ii}|}\left( \sum_{\substack{j=1 \\ j \neq i}}^n |t_{ij}|+1 \right)\right)\|x^{k}-\bar x\|_{\infty}\\
&\leq \left(\max_{i=1,\dots,n}\frac{1}{|t_{ii}|}\left( \sum_{\substack{j=1 \\ j \neq i}}^n |t_{ij}|+1 \right)\right)\|x^{k}-\bar x\|_{\infty}.
\end{align*}
The result now follows from the fact that $T$ is strongly diagonal dominant. 
\end{proof}

Now we define the Gauss-Seidel-Newton method as follows: Given $x^0\in\R^n$,
\begin{equation}\label{gs_newton}
x^{k+1} := -(P(x^{k})+D+L)^{-1}U x^{k} + (P(x^{k})+D+L)^{-1}b, \quad k\in\NN,\end{equation}
or equivalently, $x^{k+1}$ is the solution of the triangular linear system
\begin{equation}\label{gs_newton2}
(P(x^{k})+D+L)x^{k+1} = -U x^{k}+b.
\end{equation}

%

\begin{proposition}[Limit points of the Gauss-Seidel-Newton method are solutions]
If the sequence $(x^{k})_{k\in\NN}$ generated by \eqref{gs_newton} converges to $\bar x$, then $\bar x$ solves problem \eqref{eq:pwls}.
\end{proposition}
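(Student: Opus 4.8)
The plan is to mirror the argument used for the Jacobi-Newton method in Proposition \ref{conv-jacobi}, since the Gauss-Seidel-Newton iteration \eqref{gs_newton2} has exactly the same algebraic structure, with the splitting $(P(x^{k})+D+L,\, U)$ replacing $(P(x^{k})+D,\, L+U)$. First I would record that $F(x) = x^+ + Tx - b$ is continuous, being the sum of the continuous piecewise linear map $x \mapsto x^+$ and the affine map $x \mapsto Tx - b$.

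The key step is to rewrite the residual $F(x^{k+1})$ in terms of successive differences. Using the decomposition $T = D + L + U$ together with the defining relation $b = (P(x^{k})+D+L)x^{k+1} + U x^{k}$ coming from \eqref{gs_newton2}, I would compute
\begin{align*}
F(x^{k+1}) &= (P(x^{k+1})+T)x^{k+1} - b\\
&= (P(x^{k+1})+D+L+U)x^{k+1} - (P(x^{k})+D+L)x^{k+1} - U x^{k}\\
&= (P(x^{k+1})-P(x^{k}))x^{k+1} + U(x^{k+1}-x^{k}).
\end{align*}
This is the exact analogue of the identity obtained in the Jacobi case, now with $U$ in place of $L+U$.

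From here the argument proceeds identically. Since $(x^{k})_{k\in\NN}$ converges to $\bar x$, we have $x^{k+1}-x^{k} \to 0$, so $\|U(x^{k+1}-x^{k})\| \to 0$, whence $\limsup_{k} \|F(x^{k+1})\| \le \limsup_{k} \|(P(x^{k+1})-P(x^{k}))x^{k+1}\|$. For every index $i$ with $\bar x_i \neq 0$, the sign $\sgn((x^{k}_i)^+)$ is eventually constant, so the $i$-th component of the remaining term vanishes for large $k$. For every index $i$ with $\bar x_i = 0$, I would instead invoke continuity of $F$ and the bound $\sgn((x^{k+1}_i)^+)-\sgn((x^{k}_i)^+)\in\{-1,0,1\}$ to estimate
$$|F_i(\bar x)| = \lim_{k\to\infty}|F_i(x^{k+1})| \le \lim_{k\to\infty}\left|(\sgn((x^{k+1}_i)^+)-\sgn((x^{k}_i)^+))x^{k+1}_i\right| = 0,$$
since $x^{k+1}_i \to \bar x_i = 0$ while the sign difference stays bounded. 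Combining both cases gives $F(\bar x) = 0$, that is, $\bar x$ solves \eqref{eq:pwls}.

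Because the proof reduces essentially verbatim to the Jacobi case once the residual identity is established, I do not expect a genuine obstacle. The only point that requires care is the vanishing of $(P(x^{k+1})-P(x^{k}))x^{k+1}$ at indices where $\bar x_i = 0$: there one cannot argue by eventual constancy of the sign and must instead exploit that the factor $x^{k+1}_i$ tends to zero while the bounded sign difference cannot compensate.
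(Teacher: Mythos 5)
Your proposal is correct and follows exactly the paper's argument: the same residual identity $F(x^{k+1}) = (P(x^{k+1})-P(x^{k}))x^{k+1} + U(x^{k+1}-x^{k})$ derived from \eqref{gs_newton2}, followed by the same case analysis on $\bar x_i \neq 0$ versus $\bar x_i = 0$ that the paper invokes by reference to Proposition \ref{conv-jacobi}. No discrepancy to report.
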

\begin{proof}
Let $F(x) = x^+ + Tx -b$ and let us rewrite $F(x^{k+1})$ and use \eqref{gs_newton2} to obtain
\begin{align*}
F(x^{k+1}) &= (P(x^{k+1})+T)x^{k+1}-b,\\
&= (P(x^{k+1})+T)x^{k+1}-(P(x^{k})+D+L)x^{k+1}-U x^{k},\\
&= (P(x^{k+1})+D+L+U)x^{k+1}-(P(x^{k})+D+L)x^{k+1}-U x^{k},\\
&= (P(x^{k+1})-P(x^{k}))x^{k+1}+U(x^{k+1}-x^{k}).
\end{align*} The result now follows analogously to Proposition \ref{conv-jacobi}.
\end{proof}

Now, let us show that the strong Sassenfeld's condition gives, besides the existence and uniqueness of a solution, a sufficient condition for global convergence of the Gauss-Seidel-Newton method to the solution.

\begin{theorem}[Global convergence under strong Sassenfeld's condition]\label{glob_conv_GSN}
Let $b \in \R^n$ and $T\in\R^{n\times n}$. If $T$ satisfies the strong Sassenfeld's condition, then the Gauss-Seidel-Newton method \eqref{gs_newton} globally converges to the unique solution of problem \eqref{eq:pwls}.
\end{theorem}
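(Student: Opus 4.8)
My plan is to run the same Sassenfeld-type induction used in the proof of Theorem \ref{exis_uniq_sassenf}, but now applied directly to the error sequence $e^k := x^k - \bar x$, where $\bar x$ is the unique solution guaranteed by that theorem. First I would note that the iteration is well defined: the matrix $P(x^k)+D+L$ is lower triangular with diagonal entries $\sgn((x^k_i)^+)+t_{ii}$, and the strong Sassenfeld condition forces $|t_{ii}|>1$ for every $i$ (each $\beta_i<1$ already contains a $+1$ inside the bracket, so $|t_{ii}|$ exceeds a quantity that is at least $1$); hence $t_{ii}\notin\{0,-1\}$ and no pivot can vanish.

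Next I would set up the error recursion. Since $P(\bar x)\bar x = \bar x^+$, the solution satisfies $(P(\bar x)+D+L)\bar x = -U\bar x + b$, which has exactly the form of \eqref{gs_newton2}. Subtracting this from \eqref{gs_newton2} and splitting the diagonal contribution gives
$$(P(x^k)+D+L)e^{k+1} = -Ue^k - (P(x^k)-P(\bar x))\bar x.$$
Reading off row $i$ and exploiting the lower-triangular structure (so that $e^{k+1}_j$ appears only for $j\le i$), I solve for $e^{k+1}_i$ and take absolute values to obtain
$$|e^{k+1}_i| \le \frac{1}{|\sgn((x^k_i)^+)+t_{ii}|}\left(\sum_{j<i}|t_{ij}|\,|e^{k+1}_j| + \sum_{j>i}|t_{ij}|\,|e^k_j| + |(\sgn((x^k_i)^+)-\sgn(\bar x_i^+))\bar x_i|\right).$$
For the last term I would reuse the estimate already established in the proof of the Jacobi–Newton convergence theorem, namely $|(\sgn((x^k_i)^+)-\sgn(\bar x_i^+))\bar x_i| \le |x^k_i - \bar x_i| = |e^k_i|$, which follows from a three-case sign analysis.

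I would then induct on $i$, exactly mirroring Theorem \ref{exis_uniq_sassenf}. Assuming $|e^{k+1}_j|\le \beta_j\|e^k\|_\infty$ for all $j<i$, I substitute this together with $|e^k_j|\le\|e^k\|_\infty$ and the pivot bound $|\sgn((x^k_i)^+)+t_{ii}|\ge|t_{ii}|$ into the displayed inequality; the right-hand side then collapses to precisely the definitions \eqref{beta-1}--\eqref{beta-i} of $\beta_i$, giving $|e^{k+1}_i|\le\beta_i\|e^k\|_\infty$ and hence $\|e^{k+1}\|_\infty \le \beta\|e^k\|_\infty$. Since $\beta<1$, this is a geometric contraction on the errors, so $x^k\to\bar x$ from any starting point $x^0$, and $\bar x$ solves \eqref{eq:pwls} by the companion limit-point proposition.

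The step needing care — and the main obstacle — is controlling the pivot $|\sgn((x^k_i)^+)+t_{ii}|$ from below by $|t_{ii}|$, i.e. showing that the $\{0,1\}$ perturbation the projection adds to the diagonal never shrinks the effective pivot below $|t_{ii}|$; this is exactly the inequality that aligns the recursion with the Sassenfeld constants $\beta_i$ and thus drives the whole argument. Everything else is componentwise bookkeeping parallel to the existence proof and to the Jacobi–Newton case.
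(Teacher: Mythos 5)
Your proposal follows essentially the same route as the paper's proof: you subtract the fixed-point identity $(P(\bar x)+D+L)\bar x=-U\bar x+b$ from \eqref{gs_newton2} to get the error recursion, bound the projection term by $|(\sgn((x^k_i)^+)-\sgn(\bar x_i^+))\bar x_i|\le|x^k_i-\bar x_i|$ via the same case analysis used for Jacobi--Newton, and run the componentwise induction against the Sassenfeld constants $\beta_i$ to conclude $\|x^{k+1}-\bar x\|_\infty\le\beta\|x^k-\bar x\|_\infty$ with $\beta<1$. (The paper's displayed recursion has sign typos, $D-L$ and $+U$, but its componentwise formulas agree with yours.)

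The one step you flagged and left open --- the pivot bound $|\sgn((x^k_i)^+)+t_{ii}|\ge|t_{ii}|$ --- deserves the attention you gave it, and here is its precise status. Since $\sgn((x^k_i)^+)\in\{0,1\}$, the nontrivial case is $|1+t_{ii}|\ge|t_{ii}|$, which is equivalent to $t_{ii}\ge-\tfrac{1}{2}$. As you note, the strong Sassenfeld condition forces $|t_{ii}|>1$, so the pivot bound holds precisely when $t_{ii}>1$, in which case it is a one-line argument: $\sgn((x^k_i)^+)+t_{ii}\ge t_{ii}=|t_{ii}|>0$. But Definition \ref{strong_sassen_cond} involves only $|t_{ii}|$ and does not exclude $t_{ii}<-1$, and in that case the bound is false: with $\sgn((x^k_i)^+)=1$ the pivot is $|t_{ii}|-1<|t_{ii}|$, and the resulting one-step factor
\begin{equation*}
\frac{1}{|t_{ii}|-1}\left(\sum_{j<i}|t_{ij}|\beta_j+\sum_{j>i}|t_{ij}|+1\right)
\end{equation*}
need not be below one (take $n=1$, $t_{11}=-\tfrac{3}{2}$: then $\beta_1=\tfrac{2}{3}<1$ but the factor is $2$). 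So your induction, as written, closes only under the additional hypothesis that the diagonal entries are positive; for negative diagonals the contraction estimate breaks down, though the theorem's conclusion itself is not necessarily false there (for $n=1$ it still holds by Proposition \ref{diag_case}). You should therefore either state the assumption $t_{ii}>0$ explicitly or handle $t_{ii}<-1$ by a separate argument. It is worth knowing that the paper's own proof passes from $\frac{1}{|\sgn((x^k_i)^+)+t_{ii}|}\left(\sum_{j<i}|t_{ij}|\beta_j+\sum_{j>i}|t_{ij}|+1\right)$ to $\beta_i$ with exactly the same tacit use of this inequality (as does its Jacobi--Newton counterpart), so your proposal reproduces the published argument faithfully; you simply made its hidden step explicit without proving it.
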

\begin{proof}
Let $\bar x$ be the unique solution of equation \eqref{eq:pwls}. From \eqref{gs_newton2}, we have 
\begin{equation*}
(P(x^{k})+D-L)(x^{k+1}-\bar x) = U(x^{k}-\bar x)+(P(\bar x)-P(x^{k}))\bar x,
\end{equation*} which implies
\begin{equation*}
x^{k+1}_1-\bar x_1 = \frac{1}{\sgn((x^{k}_1)^+)+t_{11}}\left( -\sum_{j=2}^n t_{1j}(x^{k}_j-\bar x_j) + (\sgn(\bar x_1^+)-\sgn((x^{k}_1)^+))\bar x_1 \right),
\end{equation*} then
\begin{align*}
|x^{k+1}_1-\bar x_1| &\leq \frac{1}{|\sgn((x^{k}_1)^+)+t_{11}|}\left( \sum_{j=2}^n |t_{1j}||x^{k}_j-\bar x_j| + |\sgn(\bar x_1^+)-\sgn((x^{k}_1)^+)||\bar x_1| \right)\\
&\leq \frac{1}{|\sgn((x^{k}_1)^+)+t_{11}|}\left( \sum_{j=2}^n |t_{1j}|\|x^{k}-\bar x\|_{\infty} + |x^{k}_1-\bar x_1| \right)\\
&\leq \frac{1}{|\sgn((x^{k}_1)^+)+t_{11}|}\left( 1 + \sum_{j=2}^n |t_{1j}| \right)\|x^{k}-\bar x\|_{\infty}\\
&\leq \beta_1\|x^{k}-\bar x\|_{\infty}.
\end{align*}
By induction let us assume that $|x^{k+1}_i-\bar x_i| \leq \beta_i\|x^{k}-\bar x\|_{\infty}$ for $i=1,\dots,p-1,$ and we will show that $|x^{k+1}_p-\bar x_p| \leq \beta_p\|x^{k}-\bar x\|_{\infty}$.
Now, similarly to the proof of Theorem \ref{exis_uniq_sassenf}, we have
\begin{align*}
x^{k+1}_p-\bar x_p &= \frac{1}{\sgn((x^{k}_p)^+)+t_{pp}}\Biggl( -\sum_{j=1}^{p-1} t_{pj}(x^{k+1}_j-\bar x_j) - \sum_{j=p+1}^n t_{pj}(x^{k}_j-\bar x_j) \\
&\quad + (\sgn(\bar x_p^+)-\sgn((x^{k}_p)^+))\bar x_p \Biggr).
\end{align*}
Taking the absolute value in both sides, we get
\begin{align*}
|x^{k+1}_p-\bar x_p| &\leq \frac{1}{|\sgn((x^{k}_p)^+)+t_{pp}|}\Biggl( \sum_{j=1}^{p-1} |t_{pj}||x^{k+1}_j-\bar x_j| + \sum_{j=p+1}^n |t_{pj}||x^{k}_j-\bar x_j| \\
&\quad + |\sgn(\bar x_p^+)-\sgn((x^{k}_p)^+)\bar x_p| \Biggr),\\ &\leq \frac{1}{|\sgn((x^{k}_p)^+)+t_{pp}|}\left( \sum_{j=1}^{p-1} |t_{pj}||x^{k+1}_j-\bar x_j| + \sum_{j=p+1}^n |t_{pj}||x^{k}_j-\bar x_j| +|x^{k}_p-\bar x_p| \right),\\
&\leq \frac{1}{|\sgn((x^{k}_p)^+)+t_{pp}|}\left(\sum_{j=1}^{p-1}|t_{pj}|\beta_j+\sum_{j=p+1}^n|t_{pj}|+1 \right)\|x^{k}-\bar x\|_{\infty}\\
&\le \beta_p\|x^{k}-\bar x\|_{\infty}.
\end{align*}
Moreover,
\begin{align*}
\| x^{k+1}-\bar x \|_{\infty} &= \max_{i=1,\dots,n.}|x^{k+1}_i-\bar x_i|\\
&\leq \max_{i=1,\dots,n.}\beta_i\|x^{k}-\bar x\|_{\infty}\\
&= \beta\|x^{k}-\bar x\|_{\infty},
\end{align*} and the result follows from the fact that $\beta<1$.
\end{proof}

\section{Computational results} 
\label{sec:computationalresults}

To analyze the three methods and see their differences in practice, we run several examples of problem \eqref{eq:pwls} applying the Jacobi-Newton, Gauss-Seidel-Newton, and the semi-smooth Newton methods. All codes were implemented in Matlab $9.5.0.944444$ (R$2018$b). 

\subsection{Numerical tests for randomly generated data} 
\label{sec:computationalresults}

In this subsection, we work on two groups of problems for the numerical tests. In the first one, we used dense matrices with different dimensions $n$ equal to $1000$, $5000$, and $10000$. In the second group, we considered a matrix with sparse structure and dimensions $1000$, $5000$, and $10000$. The experiments were run on a $2.3$ GHz Intel(R) i5, $16$Gb of RAM, and Windows $10$ operating system. Next, we describe some details about the implementation. 

\medskip

\noindent {\bf (1)} {\it Stopping criterion}: We fix the tolerance for the norm of $F(x^k)$ in \eqref{eq:fucpw} as $10^{-5}$. This means that when the $2$-norm of $(x^{k})^++Tx^{k}-b$ is less than or equal to $10^{-5}$, the execution of the algorithm is stopped, and $x^{k}$ is returned as the solution. The maximum number of iterations was fixed at $1000$, but no problem in our test reached it.

 \medskip

\noindent {\bf (2)} {\it Generating random problems}: To construct the matrices for the first group of experiments, we used the Matlab routine {\it rand} to generate a random dense matrix with a predefined dimension with elements between $-1$ and $1$. To achieve convergence of the different methods, we modified the matrices in order to ensure the validity of the strong diagonal dominance condition in Definition \ref{strong_diag_cond}. To do so, we replaced the diagonal entry with $1.001$ plus the sum of the off-diagonal elements in absolute value; this ensures the convergence of both Jacobi-Newton and Gauss-Seidel-Newton methods. Finally, for the second group of problems, we evoked {\it sprand}, a sparse random matrix generator of Matlab, to generate random sparse matrices with entries between $-1$ and $1$ and density $0.3\%$. The matrices were also similarly modified in order to ensure strong diagonal dominance.

\medskip

\noindent {\bf (3)} {\it Solving linear equations}:  Each iteration of the semi-smooth Newton method requires solving the whole linear system \eqref{eq:newtonc2pw}. On the other hand, the Jacobi-Newton and Gauss-Seidel-Newton methods need to find the solution of a diagonal \eqref{jac_newton2} and upper triangular \eqref{gs_newton2} linear systems, respectively. We used the \emph{backslash} command of Matlab for the dense case since this command uses the diagonal and triangular structures of the matrices generated by both methods \eqref{jac_newton2} and \eqref{gs_newton2}. When solving Newton's linear system for the sparse case, we first use the \emph{symamd} command of Matlab, which makes permutations of rows and columns using the minimum degree algorithm (see \cite{davis2006direct, duff2017direct}) to ensure that the LU-decomposition generates the lowest number of non-zero entries possible.  

\medskip


In order to show the results, we use performance profiles (see \cite{dolan2002benchmarking}). This technique measures and compares the robustness and efficiency of several methods applied to a set of problems using a common indicator, in our case, the CPU time. 
Let us start by presenting the set of tests with dense matrices.

\subsubsection{Piecewise linear equation with dense matrices} \label{densePWL}

We took dense matrices with different dimensions in the first group of problems. We show in Figure \ref{fig:densePerfProf} the results of the performance profiles in our experiments.

\begin{figure}[ht]
\centering

\begin{subfigure}[b]{0.325\textwidth}
\centering
\includegraphics[scale=0.26]{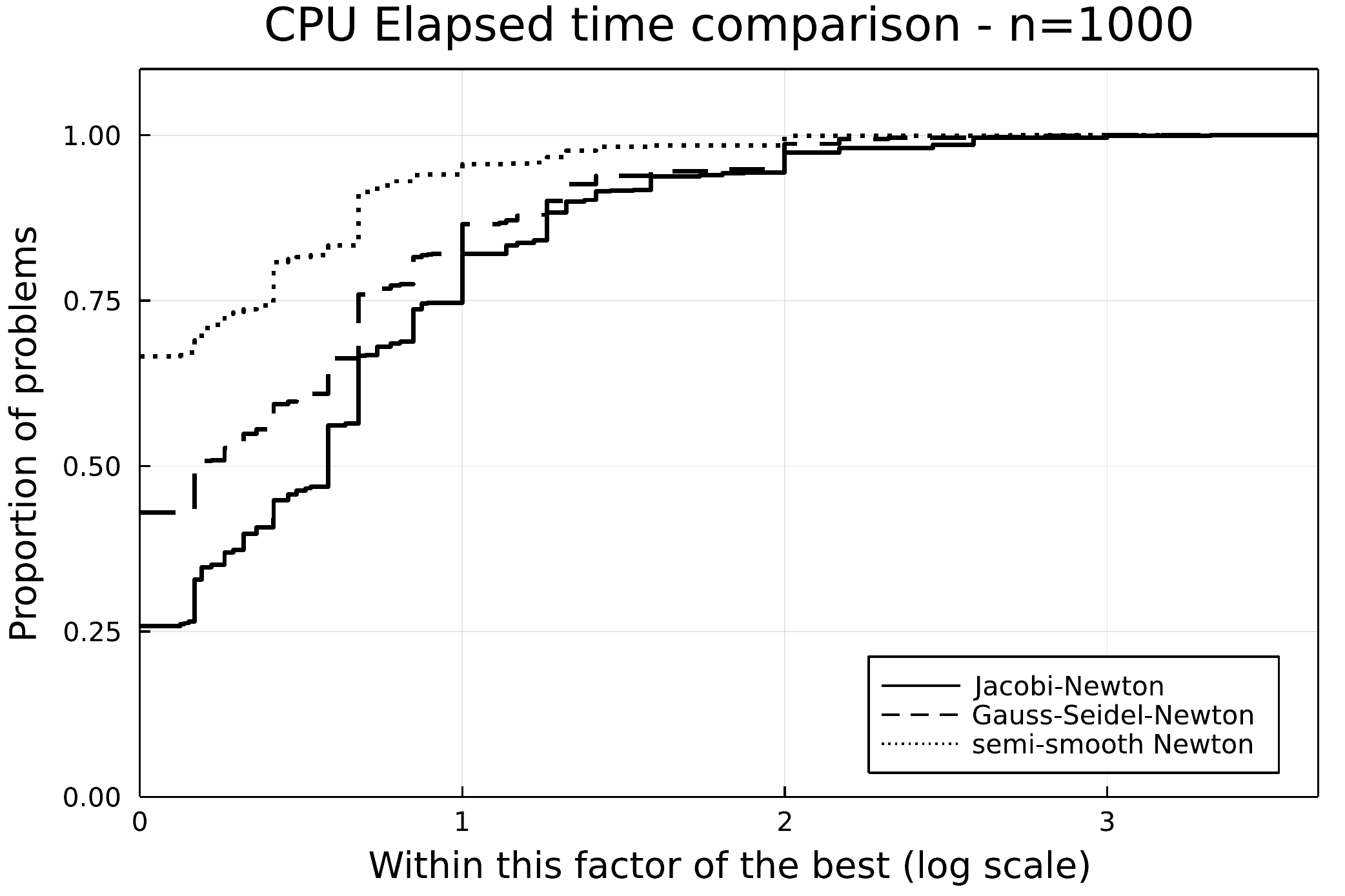}
\caption{Dense matrices $n=1000$}
\label{fig:1000dense}
\end{subfigure}
\hfill
\begin{subfigure}[b]{0.325\textwidth}
\centering
\includegraphics[scale=0.26]{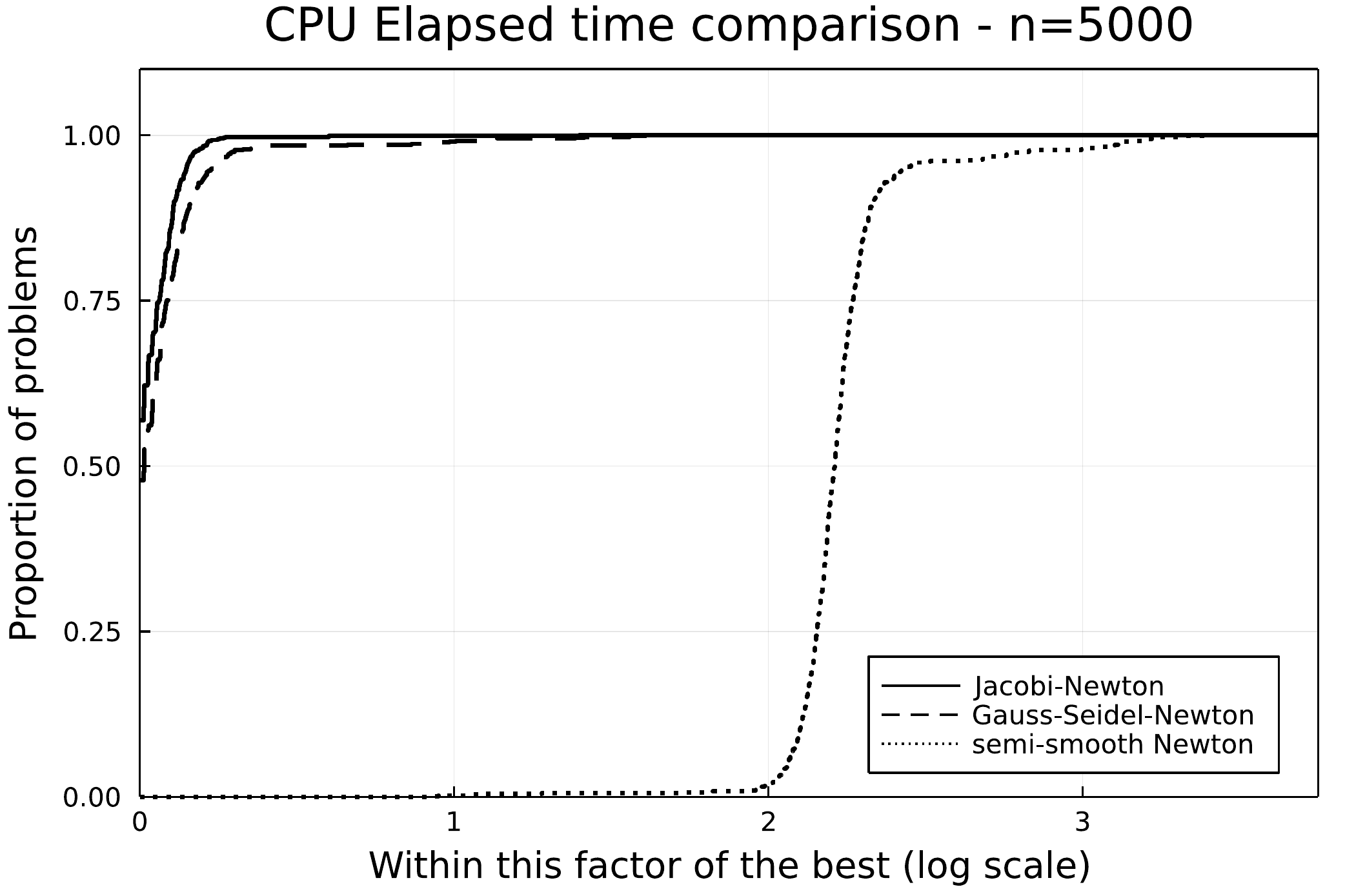}
\caption{Dense matrices $n=5000$}
\label{fig:100dense}
\end{subfigure}
\hfill
\begin{subfigure}[b]{0.325\textwidth}
\centering
\includegraphics[scale=0.26]{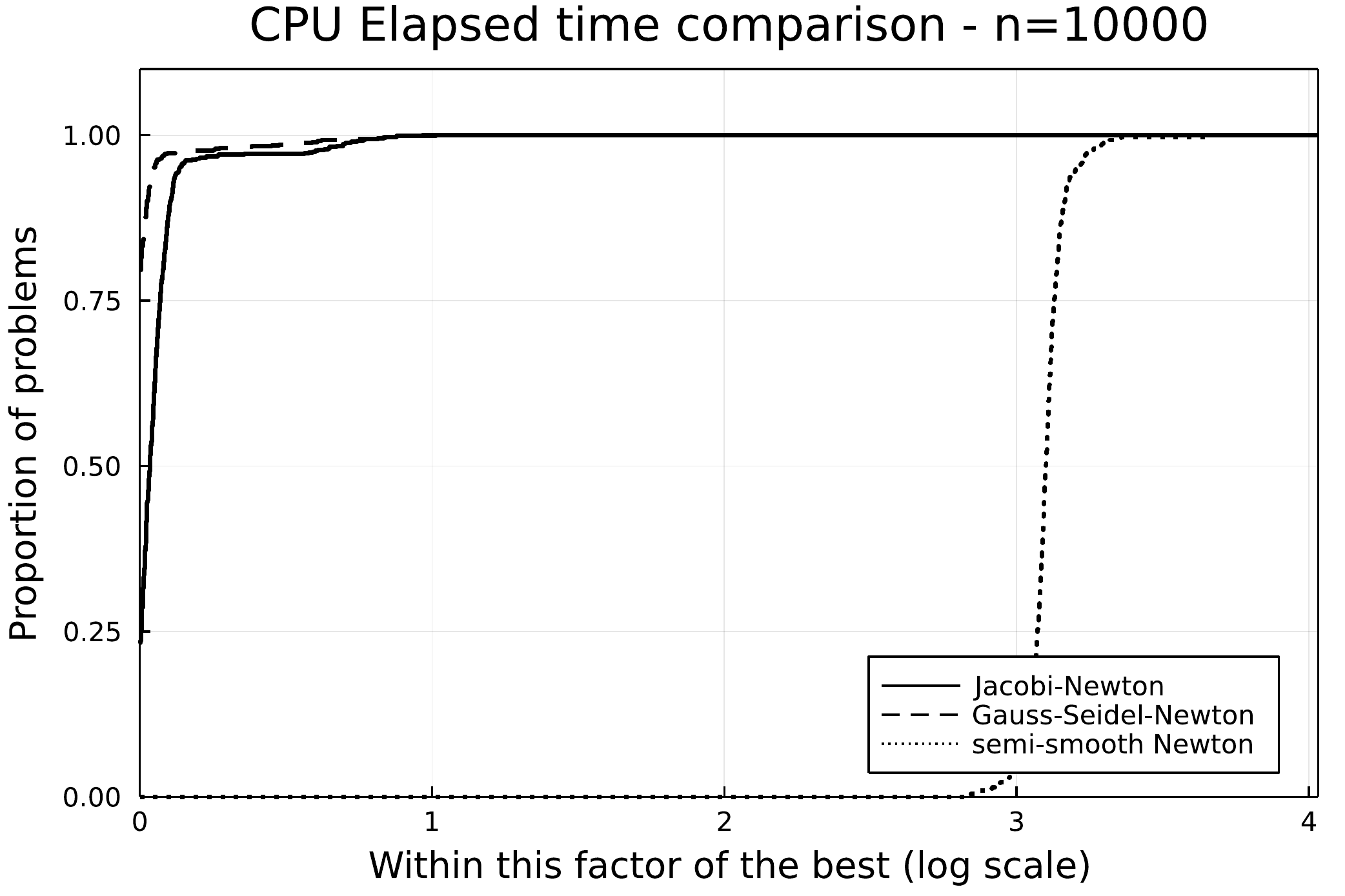}
\caption{Dense matrices $n=10000$}
\label{fig:10000dense}
\end{subfigure}
\caption{Performance profiles for dense matrices in $\log_2$ scale.}
\label{fig:densePerfProf}
\end{figure}

In Figure \ref{fig:densePerfProf}, we compare the robustness and the efficiency of the three methods applied on a set of problems with dimensions $1000$ (low), $5000$ (mid), and $10000$ (high), respectively. The number of problems was fixed at $850$ for each dimension. We first see that for the three sets, every problem was solved by the three methods. Newton's method was the most efficient in the low dimensional test, being the fastest method for circa $70\%$ of the problems. When the dimension increases, Jacobi-Newton and Gauss-Seidel-Newton are much faster. This difference is accentuated in the highest dimension we tested, where also the Gauss-Seidel variant is now slightly better than Jacobi. In particular, Newton's method took at least four times the time the other methods took for almost all mid-dimensional problems. At the same time, it was at least eight times slower for high-dimensional problems. Thus, in our tests, the simplicity of the linear system solved by Gauss-Seidel-Newton and Jacobi-Newton methods (triangular and diagonal, respectively) pays off in comparison with the Newton iteration for mid and high dimensions.

\subsubsection{Piecewise linear equation with sparse matrices} \label{sparsePWL}

Our Jacobi and Gauss-Seidel variants of the Newton iterate were devised mainly with large and sparse problems in mind. In Figure \ref{fig:sparsePerfProf}, we can see the results of our numerical experiments for sparse matrices. Here we also generated $850$ problems for each of the dimensions: $1000$, $5000$, and $10000$.

\begin{figure}[ht]
\centering

\begin{subfigure}[b]{0.325\textwidth}
\centering
\includegraphics[scale=0.26]{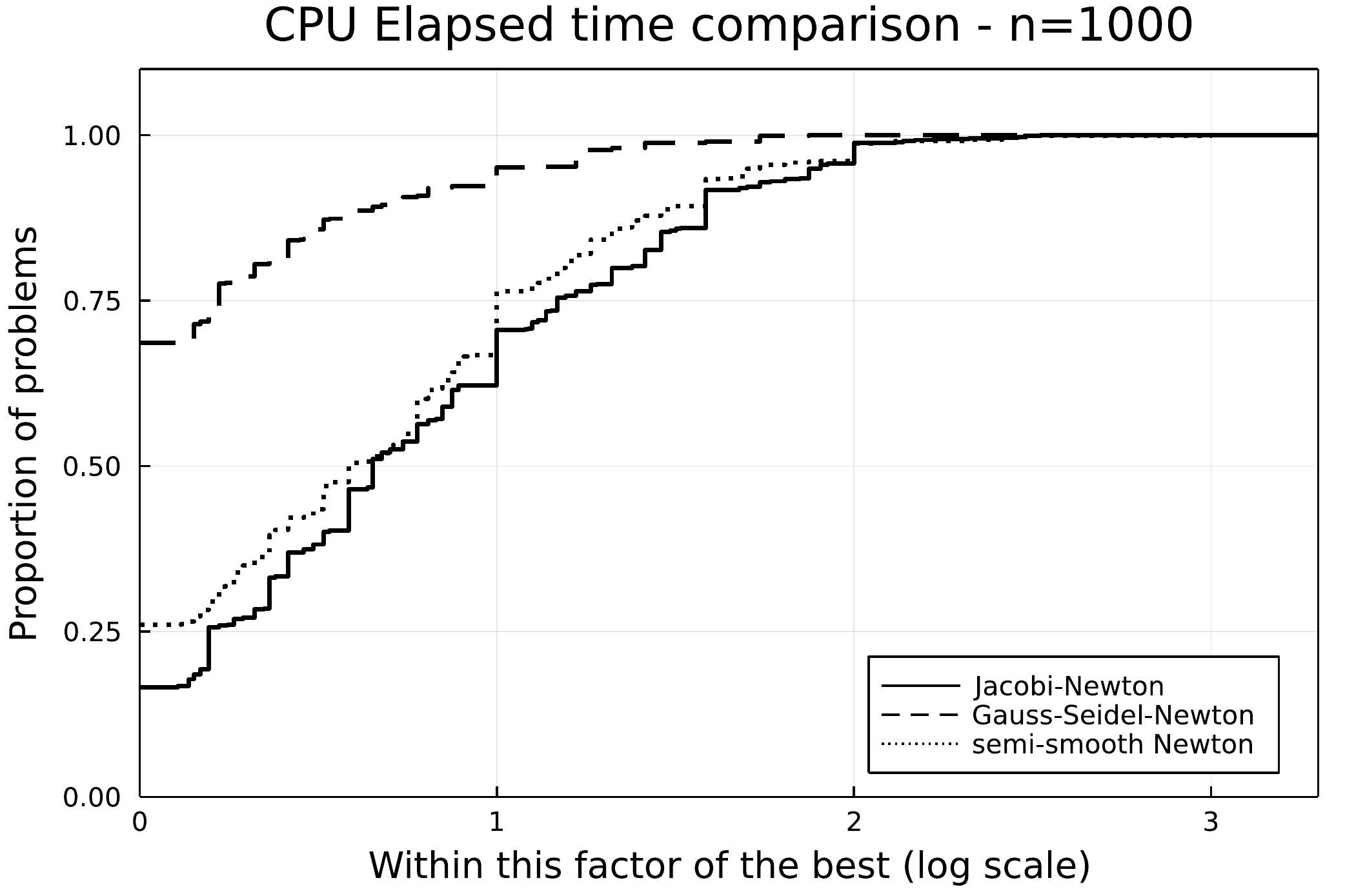}
\caption{Sparse matrices $n=1000$}
\label{fig:1000sparse}
\end{subfigure}
\hfill
\begin{subfigure}[b]{0.325\textwidth}
\centering
\includegraphics[scale=0.26]{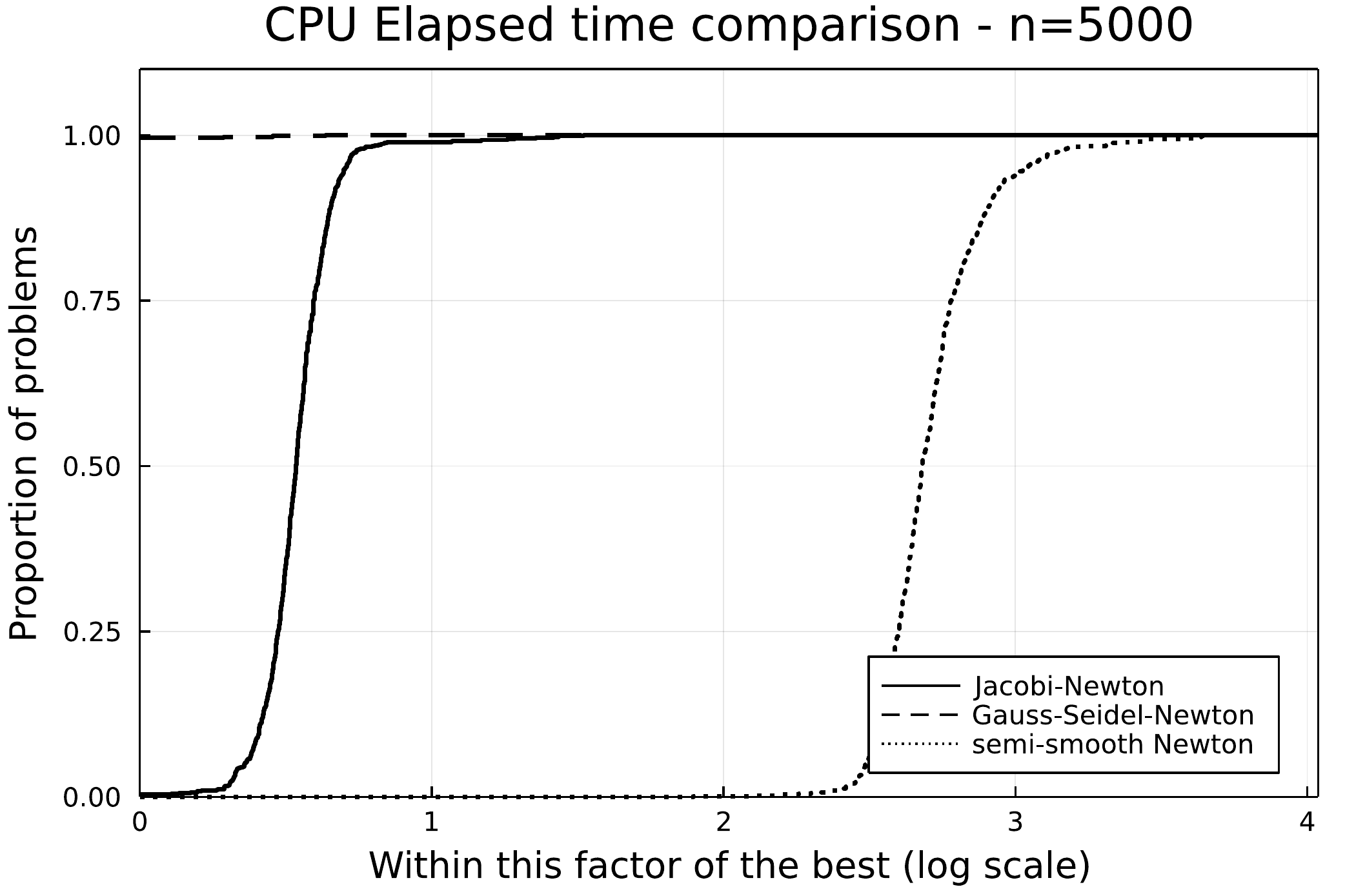}
\caption{Sparse matrices $n=5000$}
\label{fig:5000sparse}
\end{subfigure}
\hfill
\begin{subfigure}[b]{0.325\textwidth}
\centering
\includegraphics[scale=0.26]{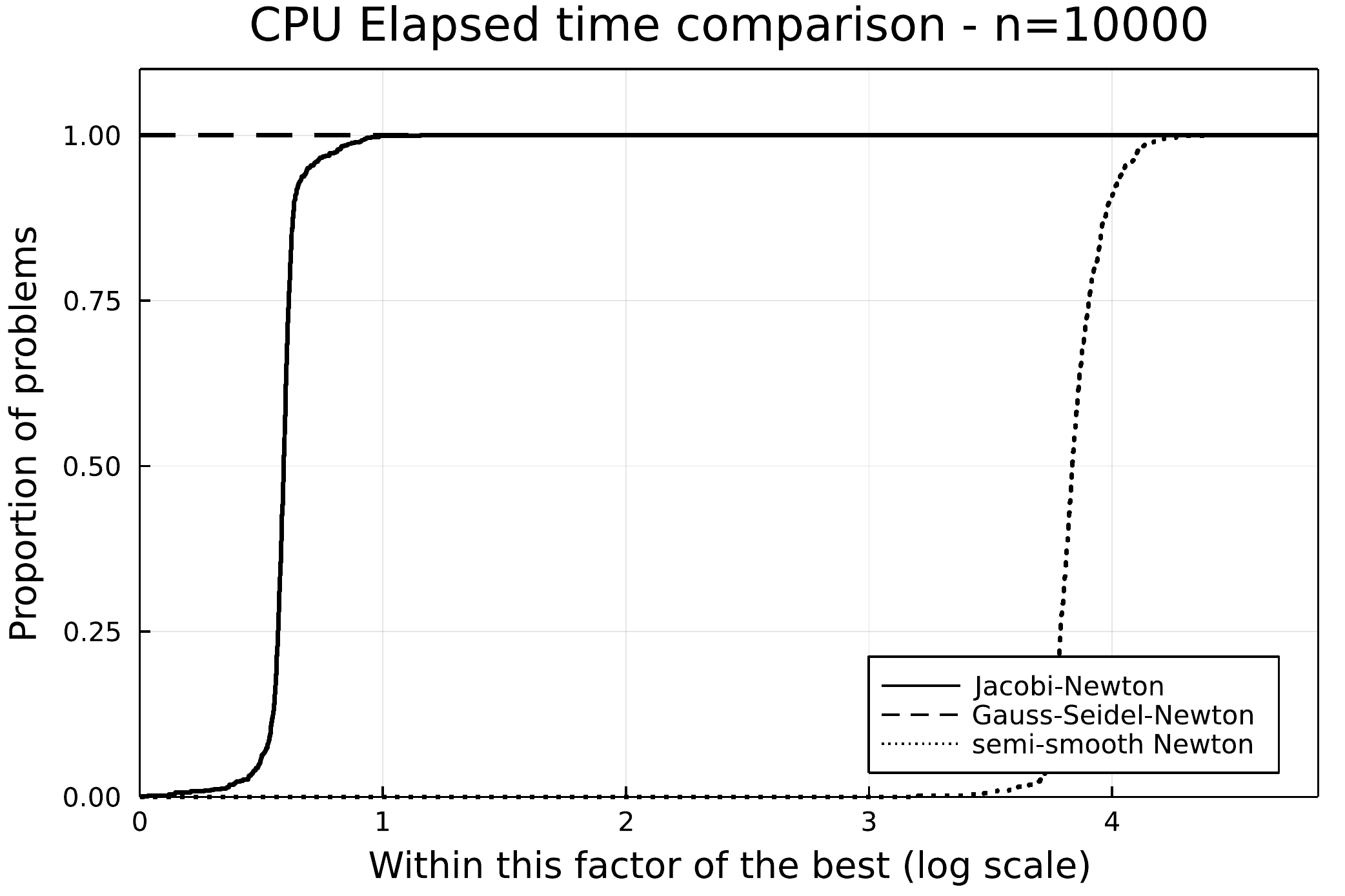}
\caption{Sparse matrices $n=10000$}
\label{fig:10000sparse}
\end{subfigure}
\caption{Performance profiles for sparse matrices in $\log_2$ scale.}
\label{fig:sparsePerfProf}
\end{figure}

In these tests, again all problems were solved by all methods, but here, the superiority of the Gauss-Seidel-Newton iterate is already apparent in the low dimensional test, while Jacobi-Newton and Newton behave similarly. The superiority of Gauss-Seidel-Newton is more evident once the dimension increases, being the fastest method for almost all mid and high-dimensional problems. At the same time, Newton becomes considerably slower than both methods. This behavior was already expected, and they attest that our Gauss-Seidel variant of Newton's method should be the method of choice for large and sparse problems.

\subsection{Application on a discretization of the Boussinesq PDE}
\label{sec:boussinestPDErealdata}

In order to test the proposed methods in solving a real model, we used them to solve an equation studied in \cite{Brugnano2008}. The authors in \cite{Brugnano2008} solve a piecewise linear equation in the form of problem \eqref{eq:pwls} resulting from the discretization of a PDE, the Boussinesq equation, using the semi-smooth Newton method (see \cite{bear1988modeling}), which models a two-dimensional flow of liquid water in a homogeneous phreatic aquifer during seven days. The Boussinesq equation models the water level in time, and a discretization of it results in an equation that can be solved by the semi-smooth Newton, the Jacobi-Newton, and the Gauss-Seidel-Newton methods. After the discretization using a square mesh of size $2N+1$ and the index $l$ representing the respective day, the piecewise linear equation resulting has the following form:
\begin{align}\nonumber
(h_{ij}+\eta_{ij}^{l+1})^+ -& \left(\frac{\kappa \Delta t}{\epsilon}\frac{H_{i,j-\frac{1}{2}}^l}{\Delta y^2}\right)\eta_{ij-1}^{l+1} - \left(\frac{\kappa \Delta t}{\epsilon}\frac{H_{i-\frac{1}{2},j}^l}{\Delta x^2}\right)\eta_{i-1j}^{l+1} \\ \nonumber
+& \frac{\kappa \Delta t}{\epsilon}\left( \frac{H_{i+\frac{1}{2},j}^l+H_{i-\frac{1}{2},j}^l}{\Delta x^2} + \frac{H_{i,j+\frac{1}{2}}^l+H_{i,j-\frac{1}{2}}^l}{\Delta y^2} \right) \eta_{ij}^{l+1} - \left(\frac{\kappa \Delta t}{\epsilon}\frac{H_{i+\frac{1}{2},j}^l}{\Delta x^2}\right)\eta_{i+1j}^{l+1}\\
-& \left(\frac{\kappa \Delta t}{\epsilon}\frac{H_{i,j+\frac{1}{2}}^l}{\Delta y^2}\right)\eta_{ij+1}^{l+1} = H_{ij}^l + \frac{\Delta t}{\epsilon}\phi_{ij}^l. \label{discretization}
\end{align}
The parameters $\epsilon:=0.4$ and $\kappa:=1$ are the porosity and the hydraulic conductivity of the aquifer, respectively, while $\Delta x:=\frac{L}{N}, \Delta y:=\frac{L}{N}, \Delta t:=86400$s ($1$ day) are the stepsizes in the $x$-axis, $y$-axis, and the time step, respectively, where the aquifer is assumed to be a paraboloid of revolution with maximum radius $L:=1000$ meters and depth of $10$ meters. Water sinks from a pointwise source located at the bottom of the aquifer, which corresponds to the parameter $\phi_{00}^l := -\frac{q}{\Delta x \Delta y}$, for every $l$, where $q:=10$ m$^3$/s is the volume of water sinking per second, with $\phi_{ij}^l:=0$ for the remaining indexes $i$ and $j$. The variables of the system \eqref{discretization} are $\eta_{ij}^{l+1}$, $l=0,\dots,6$ and $i,j = -N,\dots, N$, which represents the distance from the reference level $0$ to the surface of water at the corresponding point in space and time, while $h_{ij}$ is the given distance from the bottom of the aquifer to the reference level $0$ at the correspondent point in space. Finally, the parameter $H^l_{ij}:=(h_{ij}+\eta^l_{ij})^+$ is the total distance from the bottom of the aquifer to the surface level, which is obtained from the solution $\eta^l_{ij}$ computed in the previous day, where the level of water at day zero sits at the reference level $0$. The terms $H^l_{i\pm\frac{1}{2},j}$ and $H^l_{i,j\pm\frac{1}{2}}$ are defined as the averages of their nearest grid values. We refer to \cite{Brugnano2008} for a more detailed description of this model. In order to write the system for the day $l+1$ in the format of \eqref{eq:pwls}, we considered the change of variables $x_{ij} = h_{ij}+\eta_{ij}^{l+1}$. 
 The resulting piecewise linear system has a symmetric, positive semidefinite and block-tridiagonal matrix, specifically, the diagonal blocks are tridiagonal matrices, and the subdiagonal blocks are diagonal, so it has a pseudo pentadiagonal structure, meaning that we only need three vectors to save the entire matrix. This structure is exploited in order to compute the iterates of each method. For solving the system for each day $l+1=1,2,\ldots,7$, we used the Gauss-Seidel-Newton method since it showed better results for sparse large-scale matrices. Actually, Gauss-Seidel-Newton was about three times faster than Jacobi-Newton in an initial test with $N=50$. We used a tolerance of $10^{-5}$ for the norm of $F(x^k)$ in \eqref{eq:fucpw} to stop the iteration.

A significant detail in our implementation is the choice of the initial point for the method at each day. Having solved the system for the grid size $N=50$, we used an interpolated version (completing the missing nodes with a mean using the nearest values) of that solution for smaller grid sizes, specifically $N=100$. Then, we applied the same strategy for $N=200$ (using the levels for $N=100$). It is worth noting that this strategy helps the Gauss-Seidel-Newton method to converge faster, and it is different from the one used in \cite{Brugnano2008} where the water level for the previous day is used as the initial point. Similarly, the solution for $N=50$ is obtained from the solution of $N=25$, where the initial point from \cite{Brugnano2008} was used for $N=25$.

In table \ref{tab:BrugnaroExpResults}, we present the results in terms of time (in minutes) for each day, needed to solve each system (time), and the approximated volume of water in the phreatic aquifer at each day computed with the solution found for the equation. This serves as a measure of accuracy of the solution found as this volume can be computed from the model, that is, at day $0$, the volume of water is equal to the volume of the aquifer times the porosity constant $\kappa$, which amounts to $2,000,000\pi$ cubic meters. Thus, considering the constant flow of water $q$, we can predict the total volume of water at day $7$ to be approximately $235,185.3$ cubic meters, which is well approximated by the solution found with $N=200$. Note that our computed volume coincides with the one computed in \cite{Brugnano2008}.

\begin{table}[h!]
\centering
    \begin{tabular}{|c|c|c|c|c|c|c|c|c|c|} 
    \hline
        day$\setminus$N & \multicolumn{2}{|c|}{50} & \multicolumn{2}{|c|}{100} & \multicolumn{2}{|c|}{200} \\ \hline
        -      
        & time & volume 
        & time & volume 
        & time & volume \\ \hline
        1         
        & 9.06 & 5,419,110.3 
        & 103.64 & 5,419,172.7 
        & 1561.03 & 5,419,182.2 \\
        2         
        & 8.27 & 4,555,110.2 
        & 93.80 & 4,555,172.7 
        & 1393.78 & 4,555,182.2 \\
        3         
        & 7.58 & 3,691,110.1 
        & 82.92 & 3,691,172.6 
        & 1273.69 & 3,691,182.1   \\
				4         
        & 7.25 & 2,827,109.9 
        & 72.98 & 2,827,172.6 
        & 1155.79 & 2,827,182.1   \\
				5         
        & 6.24 & 1,963,109.8 
        & 64.82 & 1,963,172.5 
        & 934.44 & 1,963,182.1   \\
				6         
        & 5.20 & 1,099,109.8 
        & 55,04 & 1,099,172.5 
        & 834.09 & 1,099,182.1   \\
				7         
        & 5.71 & 235,109.7 
        & 57.60 & 235,172.4 
        & 806.94 & 235,182.0   \\
    \hline
    \end{tabular}
    \caption{Results for discretizations $N=50, 100, 200$ for the Gauss-Seidel-Newton method. 
    }
		\label{tab:BrugnaroExpResults}
 \end{table}


 \begin{figure}[h!]
\centering
\includegraphics[scale=0.4]{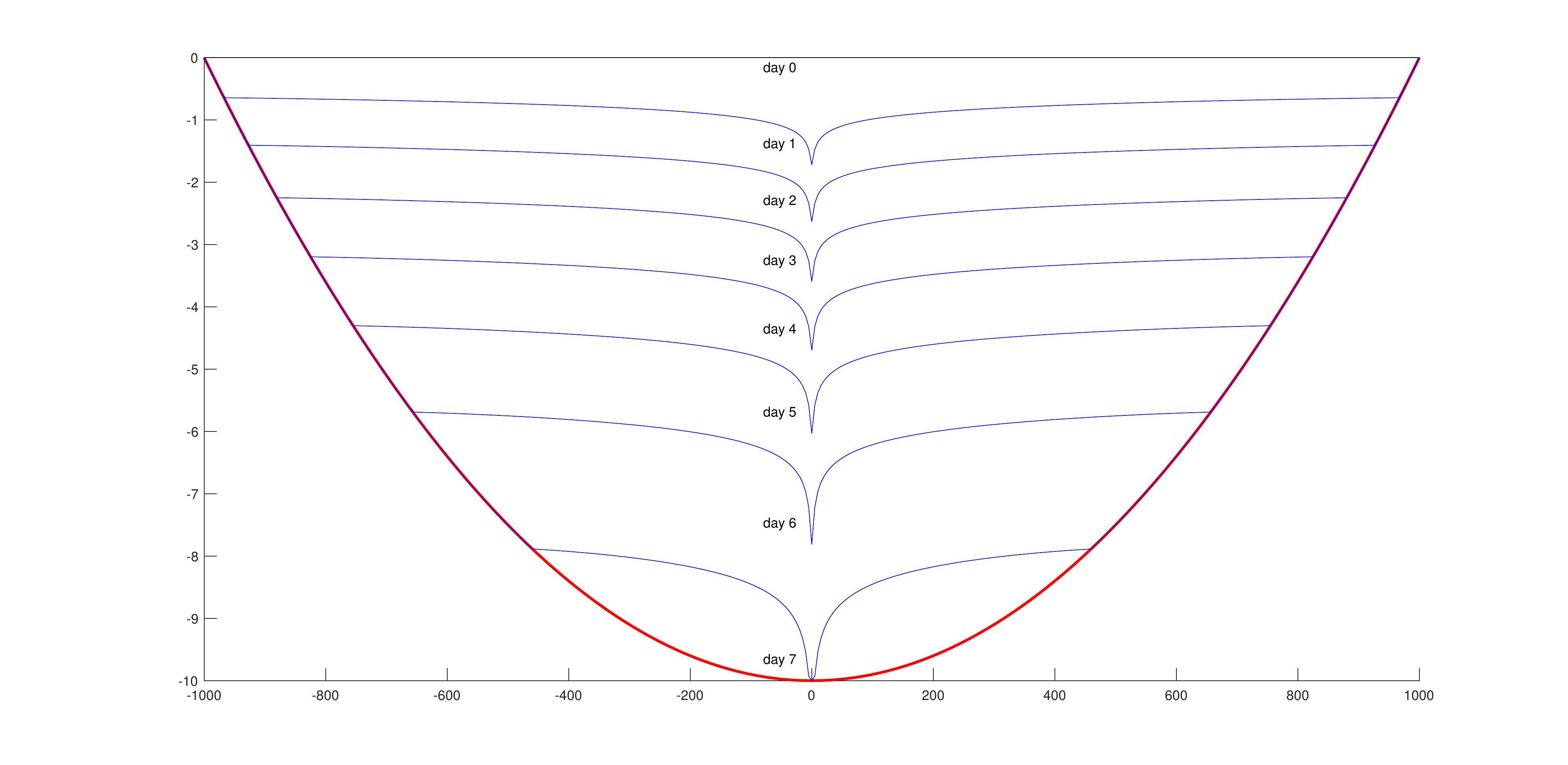}
\caption{Level of water at days $0,1,\dots,7$ leaking from a pointwise sink at the bottom of a paraboloid phreatic aquifer.}
\label{fig:N200levels}
\end{figure}

An important remark is the fact that the matrix defining the problem does not satisfy the sufficient condition for the global convergence of Gauss-Seidel-Newton (Theorem \ref{glob_conv_GSN}), which slows down the methods considerably in comparison with the standard Newton iterate. In particular our Newton implementation, as described previously, was about $20$ times faster on average for this problem, requiring only $3$ or $4$ iterations. Nevertheless, both our proposed methods are still able to converge, showing robustness, which is the point of this numerical experiment. For comparison, at the cost of one Newtonian iteration for $N=50$, we are able to compute approximately $1000$ iterations of the Gauss-Seidel-Newton method or approximately $3000$ iterations of the Jacobi-Newton method. For the Jacobi-Newton iterate one can easily make use of parallel computations to speed up the algorithm. Finally, in Figure \ref{fig:N200levels}, we draw a two-dimensional vertical cut of the approximated water levels found for each day with the Gauss-Seidel-Newton method for discretization $N=200$.



\section{Conclusions} \label{sec:conclusions}

In this paper, we considered iterative schemes for solving the piecewise linear equation $x^++Tx=b$, where $x^+$ denotes projection onto the non-negative orthant. This problem appears in solving absolute value equations and minimizing a quadratic function over the non-negative orthant. A semi-smooth Newton method has been proposed for this problem, where the existence and uniqueness of solutions were studied together with the finite convergence of the method. In \cite{Bello-Cruz:2017}, the authors conjecture that positive definiteness of $T$ would be sufficient for finite convergence of the semi-smooth Newton method. However, we showed that this assumption is enough only to avoid cycles of size two in general. 

To avoid solving a full linear system of equations at each Newtonian iteration, we proposed Newtonian methods inspired by the classical Jacobi and Gauss-Seidel methods for linear equations, where only a diagonal or triangular linear system is solved at each iteration. The existence and uniqueness of solutions are shown together with global convergence of the methods under stronger variants of the well-known sufficient conditions of convergence for linear systems, namely, diagonal dominance (for the Jacobi iterate) and Sassenfeld's criterion (for the Gauss-Seidel iterate). Numerical experiments were conducted on random problems to attest that the methods are comparable with the standard Newtonian approach, being considerably faster for large-scale and sparse problems. In an applied experiment concerning the discretization of the Boussinesq equation, we show that both methods are robust and reliable even when sufficient conditions for convergence are not met.

For future work, we expect to address the possibility of weakening the sufficient conditions we obtained for the global convergence of the Jacobi-Newton and Gauss-Seidel-Newton iterations. For instance, extensive numerical experiments suggest that Gauss-Seidel-Newton converges globally when $T$ is a symmetric and positive definite matrix. Another possibility would be to combine Jacobi and Gauss-Seidel iterates in an SOR-style, which may produce interesting theoretical and numerical results. Additionally, instead of considering projection onto the non-negative orthant, we expect to address the analogous equations obtained by projecting onto the second-order cone or the semidefinite cone. The situation is more challenging as the projection matrices in those cases do not have such a simple diagonal structure.

\bibliographystyle{ieeetr}
\bibliography{Newton-References}

\end{document}